\numberwithin{equation}{section}
\newtheorem{theorem}{Theorem}[section]
\newtheorem{corollary}[theorem]{Corollary}
\newtheorem{lemma}[theorem]{Lemma}
\theoremstyle{definition}
\theoremstyle{remark}
\newtheorem{remark}[theorem]{Remark}
\numberwithin{equation}{section}
\DeclareMathOperator{\RE}{Re}
\begin{document}
	
	\title[\tiny{Coefficient estimates of starlike functions associated with a Petal Shaped Domain}]{Coefficient problems for starlike functions associated with a petal shaped domain}
	
	\author[S. Sivaprasad Kumar]{S. Sivaprasad Kumar}
	\address{Department of Applied Mathematics, Delhi Technological University, Delhi--110042, India}
	\email{spkumar@dce.ac.in}

    	\author{Neha Verma*}
	\address{Department of Applied Mathematics, Delhi Technological University, Delhi--110042, India}
	\email{nehaverma1480@gmail.com}

	\subjclass[2010]{30C45, 30C50}
	
	\keywords{Starlike function, Coefficient Estimate, Hankel Determinants}
	\maketitle
	\begin{abstract}
In the present investigation, we consider a subclass of starlike functions associated with a petal shaped domain, recently introduced and defined by 
$$\mathcal{S}^{*}_{\rho}:=\{f\in \mathcal{A}:zf'(z)/f(z) \prec 1+\sinh^{-1} z\}.$$ We establish certain coefficient related problems such as sharp first five coefficient bounds along with sharp second and third order Hankel determinants for $\mathcal{S}^{*}_{\rho}$. Also, sixth and seventh coefficient bounds are estimated to obtain the fourth Hankel determinant bound for the same class.

\end{abstract}
\maketitle
	
\section{Introduction}
\label{intro}
Coefficient problems came into the limelight due to the famous Bieberbach's conjecture~\cite{symposium}, which states that  $ |a_n|\leq n,$ holds for every $n>1$ for each univalent function $f$ having the form
\begin{equation}
f(z) = z+\sum_{n=2}^{\infty}a_nz^n.\label{form}
\end{equation}  Then in 1984, the conjecture was proved by Louis de Branges \cite{de branges},
which paved the path for establishing many coefficient related problems till date. For more information about the conjecture, see \cite{symposium}. 
In the mean time, several techniques and subclasses of normalized univalent functions, denoted by $\mathcal{S}$, were evolved. In particular, $\mathcal {S}^*$ and $\mathcal{C}$ are the classes consisting of starlike and convex functions, mapping the unit disc $\mathbb{D}:=\{z:|z|<1\}$ onto a starlike and convex domain respectively, came into existence.

Let $g_1$ and $g_2$ be two analytic functions. We say $g_1$ is subordinate to $g_2$ or symbolically $g_1\prec g_2$, if there exists an analytic function $w$ satisfying $w(0)=0$ and $|w(z)|\leq |z|$ such that  $g_1(z)=g_2(w(z))$. In 1992, Ma-Minda \cite{ma-minda} introduced the following class by unifying all subclasses of starlike functions:
\begin{equation}
   \mathcal{S}^{*}(\varphi)=\bigg\{f\in \mathcal {A}:\dfrac{zf'(z)}{f(z)}\prec \varphi(z) \bigg\},\label{mindaclass}
\end{equation}
where $\mathcal{A}$ contains all normalized analytic functions, which are of the form, given by equation (\ref{form}) and $\varphi$ is an analytic univalent function satisfying $\RE\varphi(z)>0$, $\varphi(\mathbb{D})$ starlike with respect to $\varphi(0)=1$, $\varphi'(0)>0$ and the domain $\varphi(\mathbb{D})$ is symmetric about the real axis.
Note that the sharp upper bounds for functions belonging to the Ma-Minda classes are known only up to $a_4$ and partially for $a_5$.
The sharp bound of $a_2$ was obtained by Ma-Minda \cite{ma-minda}. Ali et al. \cite{pvalentali} obtained the sharp bounds for $a_3$ and $a_4$. One can refer \cite{shelly, bellv} for the fifth coefficient bound of functions in some specific Ma-Minda classes. In the present scenario, obtaining the improved higher-order coefficient bounds for the general Ma-Minda class is a challenge in itself. In fact, the known bounds of Carathe\'{o}dory functions play a major role in the computation of various coefficient bound estimate problems.
We denote the Carathe\'{o}dory class by $\mathcal{P}$, is the class of functions which are analytic on $\mathbb{D}$ carrying the expansion of the form $p(z)=1+\sum_{n=1}^{\infty}p_n z^n$ with positive real part.
The $q^{th}$ Hankel determinants of Taylor's coefficients for functions $f \in \mathcal{A}$ having the form (\ref{form}) assuming $a_ 1:= 1$ are defined as $H_{q,n}(f)$ for $n,q \in \mathbb{Z}^{+}$.
\begin{equation*}
	H_{q,n}(f) =\begin{vmatrix}
a_n&a_{n+1}& \ldots &a_{n+q-1}\\
a_{n+1}&a_{n+2}&\ldots &a_{n+q}\\
\vdots& \vdots &\ddots &\vdots\\
a_{n+q-1}&a_{n+q}&\ldots &a_{n+2q-2}
\end{vmatrix},
\end{equation*}
 was initially considered in \cite{pomi} and is still a topic of great interest to many authors. For different choices of $q$ and $n$, the generalized Fekete-Szeg\"{o} functional is given as $a_3-\mu a_2^2$, where $\mu$ is a complex number, and its particular case is $a_3-a_2^2$.
The expressions for second and third order Hankel determinants are given by
\begin{equation}
   H_{2,2}(f)=a_2a_4-a_3^2,\label{h22}
\end{equation}
\begin{equation*}
    H_{2,3}(f)=a_3a_5-a_4^2,
\end{equation*}
and
\begin{equation}
	H_{3,1}(f) =2 a_2a_3a_4-a_3^3-a_4^2-a_2^2a_5+a_3a_5.\label{1h3}
\end{equation}

In the study of second Hankel determinants, Janteng et al. \cite{2 Janteng}
determined the sharp estimates of $H_{2,2}(f)$ for functions in the classes $\mathcal{S}^{*}$ and $\mathcal{C}$ as $1$ and $1/8$ respectively. Krishna et al. \cite{krishna bezilevic} obtained the sharp bounds of $H_{2,2}$ for the class of Bazilevi$\breve{c}$ functions. In fact, the estimation of the bound of Hankel determinant for $q=3$ is more challenging than $q=2$ and there are only a few sharp bounds of Hankel determinants known till now. Zaprawa \cite{zap} estimated that \begin{equation*}
    |H_{3,1}(f)|\leq \begin{cases}
    1,& f\in \mathcal{S}^{*}\\
    49/540,& f\in \mathcal{C}.
    \end{cases}
\end{equation*}
Later, $|H_{3,1}(f)|\leq 8/9$ was calculated by Kwon et al.\cite{sharpstarlike}, which is recently best improved to $4/9$ for the functions in the class $\mathcal{S}^{*}$. Lecko et al. \cite{lecko 1/2 bound} obtained the sharp bound of $|H_{3,1}(f)|\leq 1/9$ for functions belonging to the class $\mathcal{S}^{*}(1/2)$. For more on Hankel determinants, refer \cite{sharp,kowal,rath,lecko 1/2 bound}.

Recently, Arora and Kumar\cite{kush} introduced a new subclass of $\mathcal{S}^{*}(\varphi)$ by choosing $\varphi(z)=1+\sinh^{-1}(z)$, given by
\begin{equation*}
\mathcal {S}^*_{\rho} =\Bigg\{f\in \mathcal {S} :\dfrac{z f'(z)}{f(z)}\prec 1+\sinh^{-1}(z)=:\rho(z)\Bigg\}.
\end{equation*}
The function $\rho(z)$ is a multivalued function having branch cuts along the line segments $(-\infty,-i)\cup (i,\infty)$ on the imaginary axis. Thus the function is analytic on $\mathbb{D}$. In geometric point of view, $\rho(z)$ takes the unit disk $\mathbb{D}$ onto a domain which is petal-shaped, $\Omega_{\rho}=\{\nu\in \mathbb{C}:|\sinh(\nu-1)|<1\}$, some coefficient problems of the functions belonging to the class $\mathcal{S}^{*}_{\rho}$ were deduced in \cite{4petalmgkhan}. In the present study, we determine the sharp bounds of first five coefficients and Hankel determinants for functions in $\mathcal{S}^{*}_{\rho}$. Further, the bounds of the sixth and seventh coefficient are obtained to estimate the bound of the fourth Hankel determinant.
\section{Sharp Coefficient Problems for the Class $\mathcal{S}^{*}_{\rho}$}

\subsection{Preliminary}
Let $f\in \mathcal {S}^{*}_{\rho},$ then there exists a Schwarz function $w(z)=\sum_{k=1}^{\infty}w_{k}z^k$ so that 
\begin{equation}
	\dfrac{zf\;'(z)}{f(z)}= 1+\sinh^{-1} (w(z)).\label{4 schwarz}
	\end{equation}
Assume that $(p(z)+1)w(z)=(p(z)-1)$ with $p(z)=1+p_{1}z+p_{2}z^2+\cdots \in \mathcal {P}.$ After substituting the values of $w(z)$, $p(z)$ and $f(z)$ in (\refeq{4 schwarz}) and comparing the corresponding coefficients of either side of the equation, yields the following: 
	\begin{equation}
		a_2=\frac{1}{2}p_1, \quad a_3=\frac{1}{4}p_2, \quad a_4=\frac{1}{144}\bigg(-p_{1}^3-6p_{1}p_2+24p_3\bigg),\label{4 a2}
		\end{equation}
		\begin{equation}
		a_5=\frac{1}{1152}\bigg(5p_1^4-6p_1^2p_2-36p_2^2-48p_1p_3+144p_4\bigg), \label{4 a5}
	\end{equation}
\begin{equation}
    a_6=\frac{-54 p_1^5 + 355 p_1^3 p_2 + 150 p_1 p_2^2 - 1680 p_2 p_3 - 1080 p_1 p_4 + 
 2880 p_5}{28800},\label{4 a6}
\end{equation}
and \begin{align}
    a_7&=\frac{1}{2073600}\bigg(1031 p_1^6 - 17220 p_1^4 p_2 + 26100 p_1^2 p_2^2 + 9000 p_2^3 + 
 19200 p_1^3 p_3+ 33120 p_1 p_2 p_3\nonumber\\
 &\quad\quad\quad\quad\quad\quad - 57600 p_3^2 + 4320 p_1^2 p_4 -108000 p_2 p_4 - 69120 p_1 p_5\bigg).\label{4 a7}
\end{align}

To obtain the main result, we need the following:
\begin{lemma}\cite{rj}
Let $p\in \mathcal{P}$ be of the form $1+\sum_{n=1}^{\infty}p_nz^n.$ Then
\begin{equation}
        |p_1^4-3p_1^2p_2+p_2^2+2p_1p_3-p_4|\leq 2\label{p}
\end{equation}
and
\begin{equation}
       |p_3-2p_1p_2+p_1^3|\leq 2.\label{q}
    \end{equation}
    \end{lemma}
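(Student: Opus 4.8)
The plan is to exploit the fact that the Carath\'eodory class $\mathcal{P}$ is closed under reciprocation, which converts both unwieldy functionals into genuine Taylor coefficients, to which the classical bound applies. The starting observation is that if $p\in\mathcal{P}$, then $\RE(1/p)=\RE(p)/|p|^2>0$ on $\mathbb{D}$, while $1/p$ is analytic there (since $p$ never vanishes) and $(1/p)(0)=1$; hence $1/p\in\mathcal{P}$ as well. Writing $1/p=1+\sum_{n\ge 1}q_n z^n$, each $q_n$ is a polynomial in $p_1,\dots,p_n$, and the classical Carath\'eodory inequality gives $|q_n|\le 2$ for every $n$.

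First I would compute $q_3$ and $q_4$ explicitly by comparing coefficients in the identity $p(z)\cdot(1/p)(z)=1$. Matching powers of $z$ yields the triangular system $q_1=-p_1$, $q_2=p_1^2-p_2$, and then
\begin{equation*}
q_3=-\bigl(p_1^3-2p_1p_2+p_3\bigr),\qquad q_4=p_1^4-3p_1^2p_2+p_2^2+2p_1p_3-p_4.
\end{equation*}
The expression for $q_4$ is exactly the functional appearing in \eqref{p}, while $q_3$ is, up to sign, the functional appearing in \eqref{q}.

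With these identities in hand the two bounds are immediate: since $1/p\in\mathcal{P}$, the coefficient estimate gives $|q_3|\le 2$ and $|q_4|\le 2$, so that $|p_3-2p_1p_2+p_1^3|=|q_3|\le 2$ and $|p_1^4-3p_1^2p_2+p_2^2+2p_1p_3-p_4|=|q_4|\le 2$, as claimed.

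There is essentially no analytic obstacle here; the only non-obvious step is recognizing that the two functionals are (signed) Taylor coefficients of the reciprocal $1/p$. I would carefully recheck the coefficient recursion so that the signs and the numerical coefficients in $q_3$ and $q_4$ reproduce the expressions in \eqref{p} and \eqref{q} exactly. An alternative, more computational route would substitute the Libera--Zlotkiewicz parametrization of $p_2,p_3,p_4$ in terms of $p_1$ and auxiliary disc parameters and maximize directly, but that forces a case analysis which the reciprocation argument sidesteps entirely.
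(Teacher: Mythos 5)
Your proposal is correct: I have checked the coefficient recursion from $p(z)\cdot(1/p)(z)=1$, and indeed $q_3=-(p_1^3-2p_1p_2+p_3)$ and $q_4=p_1^4-3p_1^2p_2+p_2^2+2p_1p_3-p_4$, while $1/p\in\mathcal{P}$ follows exactly as you argue from $\RE(1/p)=\RE(p)/|p|^2>0$ and $(1/p)(0)=1$, so the classical bound $|q_n|\leq 2$ yields both \eqref{p} and \eqref{q}. Note that the paper offers no proof at all here --- the lemma is imported by citation from Libera and Z\l otkiewicz --- so your argument cannot clash with anything in the text; in fact the reciprocation device is precisely the classical mechanism behind these inequalities (the cited paper concerns early coefficients of inverses/reciprocals of functions with positive real part), so you have reconstructed the standard derivation rather than found a new one. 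Your closing remark is also apt: the alternative via the Libera--Z\l otkiewicz parametrization \eqref{b2}--\eqref{b4} would prove the same bounds but only through a case analysis over the disc parameters, which the one-line reciprocal argument avoids entirely.
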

\begin{lemma}\cite{ma-minda}
Let $p\in \mathcal{P}$ be of the form $1+\sum_{n=1}^{\infty}p_nz^n.$ Then
\begin{align*}
      |p_2-\lambda p_1^2|\leq \begin{cases} 2-4\lambda,& \lambda\leq 0;\\
      2, & 0\leq \lambda\leq 1;\\
      4\lambda-2, & \lambda\geq 1
      \end{cases}
\end{align*}
when $\lambda<0$ or $\lambda>1,$ the equality holds if and only if $p(z)=(1+z)/(1-z)$ or one of its rotations. If $0<\lambda<1,$ then the inequality holds if and only if $p(z)=(1+z^2)/(1-z^2)$ or one of its rotations. If $\lambda=0,$ the equality holds if and only if $p(z)=(1+\eta)(1+z)/(2(1-z))+(1-\eta)(1-z)/(2(1+z))(0\leq \eta\leq 1)$ or one of its rotations. If $\lambda=1,$ the equality holds if and only if p is the reciprocal of one of the functions such that the equality holds in case of $\lambda=0.$ Though the above upper bound is sharp for $0<\lambda<1,$ still it can be improved as follows:
\begin{equation}
      |p_2-\lambda p_1^2|+\lambda|p_1|^2\leq 2 \quad (0<\lambda\leq 1/2)\label{use}
\end{equation}
and 
\begin{equation*}
      |p_2-\lambda p_1^2|+(1-\lambda)|p_1|^2\leq 2 \quad (1/2<\lambda\leq 1).
\end{equation*}
\end{lemma}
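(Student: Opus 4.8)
The plan is to reduce the whole statement to the classical Carath\'{e}odory parametrization of the second coefficient. For $p\in\mathcal{P}$ of the given form one has the well-known representation $2p_2 = p_1^2 + (4-p_1^2)x$ for some $x$ with $|x|\le 1$, together with the normalization $|p_1|\le 2$. Since $\mathcal{P}$ is invariant under the rotations $p(z)\mapsto p(e^{i\theta}z)$, under which $p_n\mapsto e^{in\theta}p_n$ and hence $p_2-\lambda p_1^2\mapsto e^{2i\theta}(p_2-\lambda p_1^2)$, the quantity $|p_2-\lambda p_1^2|$ is rotation-invariant. I would first use this to reduce, without loss of generality, to the case $p_1=p\in[0,2]$ real and nonnegative.

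Substituting the parametrization gives
\[
p_2-\lambda p_1^2 = \left(\tfrac{1}{2}-\lambda\right)p^2 + \tfrac{1}{2}(4-p^2)x,
\]
and the triangle inequality with $|x|\le 1$ yields
\[
|p_2-\lambda p_1^2| \le \left(\left|\tfrac{1}{2}-\lambda\right|-\tfrac{1}{2}\right)p^2 + 2 =: g(p).
\]
The right-hand side is an explicit quadratic in $p$ on $[0,2]$, so the three cases fall out by inspecting the sign of the coefficient of $p^2$: when $0\le\lambda\le 1$ this coefficient is nonpositive, so the maximum $2$ is attained at $p=0$; when $\lambda<0$ or $\lambda>1$ it is positive, so the maximum sits at $p=2$, giving $2-4\lambda$ and $4\lambda-2$ respectively. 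This recovers all three branches of the bound.

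For the sharpened inequalities I would avoid re-optimizing and instead add the correction term directly to $g(p)$. For $0<\lambda\le 1/2$ one has $|\tfrac12-\lambda|=\tfrac12-\lambda$, so adding $\lambda p^2$ cancels the $p^2$-terms and leaves exactly $2$; the range $1/2<\lambda\le 1$ is identical after adding $(1-\lambda)p^2$. The equality statements I would then read off from the extremal configurations that make both the parametrization bound and the triangle inequality tight: $p=2$ (the half-plane map $(1+z)/(1-z)$ and its rotations) in the outer ranges, $p=0$ with $|x|=1$ (the map $(1+z^2)/(1-z^2)$) in the middle range, and the degenerate convex combinations at the endpoints $\lambda=0,1$.

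The only genuinely delicate point, and where I would concentrate the effort, is the rigorous justification of the equality cases rather than the bound itself: tracking which choices of $p_1$, $x$, and phase alignment actually attain each estimate, and verifying at the boundary values $\lambda=0$ and $\lambda=1$ that the extremal functions are precisely the stated one-parameter families. The inequalities themselves are routine once the parametrization is in hand.
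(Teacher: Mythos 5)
Your derivation of the three-branch bound and of the two refined inequalities is correct. Note, however, that the paper contains no proof of this statement at all: it is quoted as a known lemma from Ma--Minda \cite{ma-minda}, so the honest comparison is with the classical argument rather than with anything in this paper. Ma--Minda's original proof works with the Schwarz function: writing $p=(1+w)/(1-w)$ with $w(z)=c_1z+c_2z^2+\cdots$, one gets $p_2-\lambda p_1^2=2c_2+2(1-2\lambda)c_1^2$, and the coefficient bound $|c_2|\le 1-|c_1|^2$ yields $|p_2-\lambda p_1^2|\le 2+2\left(|1-2\lambda|-1\right)|c_1|^2$, which under $p_1=2c_1$ is exactly your $g(p)$; your route through the Libera--Z\l otkiewicz representation $2p_2=p_1^2+(4-p_1^2)x$ (equation (\ref{b2}) of the paper) is the same estimate in different clothing, and the cancellation you use for the refined inequalities (adding $\lambda p^2$, resp.\ $(1-\lambda)p^2$, to kill the quadratic term) is precisely how the improvements (\ref{use}) arise. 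So on the inequality side your proposal is complete and essentially the standard proof.

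The one place where your write-up is not yet a proof is the equality classification, which you flag but do not carry out, and it is genuinely more than ``reading off'' the tight configurations. For $0<\lambda<1$, your argument shows equality forces $p_1=0$ and $|x|=1$, i.e.\ $|p_2|=2$; but to conclude that $p$ must be a rotation of $(1+z^2)/(1-z^2)$ you need the structure theorem for extremal Carath\'eodory functions: $|p_2|=2$ holds iff $p(z)=\sum_j\mu_j(1+\epsilon_jz)/(1-\epsilon_jz)$ with $\mu_j\ge 0$, $\sum_j\mu_j=1$, $|\epsilon_j|=1$ and all $\epsilon_j^2$ equal, after which the extra constraint $p_1=0$ forces equal weights on $\pm\epsilon$ and hence the stated function. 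The one-parameter families at $\lambda=0$ and $\lambda=1$ likewise come out of this structure theorem (arbitrary weights $\eta$, and reciprocals), not out of the triangle-inequality bookkeeping alone. Since the paper only uses the inequalities --- specifically (\ref{use}) with $\lambda=17/66$ in the bound for $a_5$ --- and cites the equality statements, this gap does not affect anything the paper relies on, but it would need to be filled for a self-contained proof of the lemma as stated.
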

Along, with we recall that 
\begin{equation}
    \max_{0\leq t\leq 4}(At^2+Bt+C)=
    \begin{cases}
    C,& B\leq0, A\leq \frac{-B}{4};\\
    16A+4B+C, & B\geq0, A\geq \frac{-B}{8}\quad \text{or}\quad B\leq 0, A\geq \frac{-B}{4};\\
    \dfrac{4AC-B^2}{4A},& B>0, A\leq \frac{-B}{8}.\label{l}
    \end{cases}
\end{equation}
The following inequalities are required to obtain the bound of fourth order Hankel determinants result:
\begin{lemma}\label{2 pomi lemma}
Let $p(z)=1+\sum_{n=1}^{\infty}p_nz^n\in \mathcal{P}.$ Then
\begin{equation*}
    |p_n|\leq 2, \quad n\geq 1,\label{2 caratheodory1}
\end{equation*}
\begin{equation*}
    |p_{n+k}-\lambda p_n p_k|\leq \begin{cases}
    2, & 0\leq \lambda\leq 1;\\
    2|2\lambda-1|,& elsewhere
    \end{cases}\label{2 caratheodory2}
\end{equation*}
and \begin{equation*}
    |p_1^3-\lambda p_3|\leq
    \begin{cases}2|\lambda-4|,& \lambda\leq 4/3;\\ \\
    2\lambda\sqrt{\dfrac{\lambda}{\lambda-1}},& 4/3<\lambda.
    \end{cases}\label{2 caratheodory3}
\end{equation*}
\end{lemma}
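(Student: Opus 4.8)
The plan is to obtain all three inequalities from the Herglotz representation of the Carath\'eodory class, which packages the only structural input that is really needed. If $p\in\mathcal P$, then there is a probability measure $\mu$ on $[-\pi,\pi]$ with
\[
p(z)=\int_{-\pi}^{\pi}\frac{1+e^{-it}z}{1-e^{-it}z}\,d\mu(t),
\]
and reading off coefficients gives $p_n=2c_n$, where $c_n:=\int_{-\pi}^{\pi}e^{-int}\,d\mu(t)$ satisfies $|c_n|\le1$. The first inequality $|p_n|\le2$ is then immediate.

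For the second inequality I would exploit the covariance identity
\[
c_{n+k}-c_nc_k=\int_{-\pi}^{\pi}\bigl(e^{-int}-c_n\bigr)\bigl(e^{-ikt}-c_k\bigr)\,d\mu(t),
\]
which follows by expanding and using $\int e^{-imt}\,d\mu=c_m$. Since $\int|e^{-imt}-c_m|^2\,d\mu=1-|c_m|^2$, the Cauchy--Schwarz inequality gives $|c_{n+k}-c_nc_k|\le\sqrt{(1-|c_n|^2)(1-|c_k|^2)}$. Now $p_{n+k}-\lambda p_np_k=2\bigl(c_{n+k}-2\lambda c_nc_k\bigr)$, and splitting $c_{n+k}-2\lambda c_nc_k=(c_{n+k}-c_nc_k)+(1-2\lambda)c_nc_k$ and substituting $|c_n|=\sin A$, $|c_k|=\sin B$ with $A,B\in[0,\pi/2]$ bounds the modulus by
\[
\cos A\cos B+|1-2\lambda|\sin A\sin B.
\]
For $0\le\lambda\le1$ one has $|1-2\lambda|\le1$, so this is at most $\cos A\cos B+\sin A\sin B=\cos(A-B)\le1$, yielding the bound $2$; for $\lambda$ outside $[0,1]$ the coefficient exceeds $1$, the maximum is attained at $A=B=\pi/2$, and the bound becomes $2|2\lambda-1|$.

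For the third inequality the Herglotz moments are no longer enough, and I would switch to the Libera--Zlotkiewicz parametrization of $(p_1,p_2,p_3)$. After a rotation we may take $p_1=p\in[0,2]$, and then
\[
4p_3=p^3+2p(4-p^2)x-p(4-p^2)x^2+2(4-p^2)(1-|x|^2)y
\]
for some $x,y$ with $|x|\le1$ and $|y|\le1$. Substituting this into $p_1^3-\lambda p_3$, applying the triangle inequality and setting $r=|x|$, the problem reduces to maximizing an explicit function over $(p,r)\in[0,2]\times[0,1]$. After optimizing in one of the variables I would put $t=p^2\in[0,4]$ and invoke the auxiliary formula (\ref{l}) to finish. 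The maximum sits at the boundary $p=2$ — where $p_1^3-\lambda p_3=2(4-\lambda)$, of modulus $2|\lambda-4|$ — when $\lambda\le4/3$, and migrates to an interior point for $\lambda>4/3$, where it evaluates to $2\lambda\sqrt{\lambda/(\lambda-1)}$; the two expressions agree at $\lambda=4/3$, confirming continuity.

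The routine part throughout is the algebra; the main obstacle is the two-parameter optimization in the third inequality, namely locating the maximum correctly (boundary versus interior in $p$, and the right critical value in $r$) and verifying that the transition between the two regimes falls exactly at $\lambda=4/3$. By contrast, the phase book-keeping in the second inequality is resolved cleanly by the trigonometric substitution, and the first is purely formal.
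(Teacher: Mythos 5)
Your proposal is correct in substance, and it necessarily differs from the paper, because the paper gives no proof of this lemma at all: it is quoted as a package of known Carath\'eodory-class facts (the first bound is classical, the second and third are standard tools in the Hankel-determinant literature), whereas you supply a self-contained argument. Your treatment of the first two inequalities is complete and sound: the covariance identity for the Herglotz moments checks by direct expansion, Cauchy--Schwarz gives $|c_{n+k}-c_nc_k|\leq\sqrt{(1-|c_n|^2)(1-|c_k|^2)}$, and the substitution $|c_n|=\sin A$, $|c_k|=\sin B$ handles both regimes, since for $M:=|1-2\lambda|>1$ the supremum of $\cos A\cos B+M\sin A\sin B$ over $A,B\in[0,\pi/2]$ is $\sqrt{\cos^2A+M^2\sin^2A}\leq M$, attained at $A=B=\pi/2$. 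The third inequality you leave as a sketch, but the announced critical structure is the right one and does close: with $r=|x|$, the inner factor $2+2pr+(p-2)r^2$ is concave in $r$ with vertex $r=p/(2-p)$, so for $p\in[1,2]$ one takes $r=1$ and the bound becomes $B(p)=(1-\lambda)p^3+3\lambda p$, whose derivative $3\big((1-\lambda)p^2+\lambda\big)$ is nonnegative throughout $[1,2]$ exactly when $\lambda\leq 4/3$ (maximum $B(2)=2(4-\lambda)$) and otherwise vanishes at $p_0=\sqrt{\lambda/(\lambda-1)}\in(1,2)$, where $B(p_0)=2\lambda\sqrt{\lambda/(\lambda-1)}$; on $p\in[0,1]$ one gets $B(p)=(1-\lambda/4)p^3+(\lambda/4)(2+p)(p^2-2p+4)$ with $B'(p)=3p^2$, so its maximum $1+2\lambda$ at $p=1$ is dominated in both regimes, and $\lambda<0$ follows from the trivial estimate $8+2|\lambda|=2|\lambda-4|$. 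Two small corrections to the sketch: the auxiliary formula (\ref{l}) does not literally apply, since after setting $t=p^2$ the function to maximize is $p(3\lambda+(1-\lambda)t)$, odd in $p$ rather than quadratic in $t$, so plain one-variable calculus (as above) is what actually finishes; and for $\lambda>4/3$ the maximum is interior only in $p$, still sitting on the face $r=1$, not at a genuinely interior point of the $(p,r)$-rectangle. Neither affects the conclusion; your endpoint values and the transition at $\lambda=4/3$ are exactly right.
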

\begin{lemma}\cite[Corollary 1]{alipcubelemma}\label{p1cubelemma}
Let $p(z)\in \mathcal{P}.$ Then
\begin{equation*}
    |\lambda p_1^3-(\lambda+1)p_1p_2+p_3|\leq \begin{cases}
    2,& 0\leq \lambda\leq 1;\\
    2|\lambda-1|,& elsewhere.
    \end{cases}
\end{equation*}
\end{lemma}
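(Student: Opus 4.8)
The plan is to convert the inequality into a finite-dimensional extremal problem via the classical representation of $p_2,p_3$ in terms of $p_1$, and then optimize. Since $p(e^{i\theta}z)$ sends $p_n\mapsto e^{in\theta}p_n$, the functional $\Phi:=\lambda p_1^3-(\lambda+1)p_1p_2+p_3$ is multiplied by $e^{3i\theta}$, so $|\Phi|$ is rotation invariant and, using $|p_1|\le2$ (Lemma~\ref{2 pomi lemma}), I may take $p_1=p\in[0,2]$. Substituting
\[
2p_2=p^2+(4-p^2)x,\qquad 4p_3=p^3+2(4-p^2)px-(4-p^2)px^2+2(4-p^2)(1-|x|^2)y,
\]
with $|x|\le1$, $|y|\le1$, a direct simplification collects the cubic, linear, quadratic and free contributions into
\[
\Phi=\frac{2\lambda-1}{4}\,p^3-\frac{\lambda}{2}\,p(4-p^2)\,x-\frac{p(4-p^2)}{4}\,x^2+\frac{(4-p^2)(1-|x|^2)}{2}\,y.
\]

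The three $x$-coefficients $A=\tfrac{2\lambda-1}{4}p^3$, $B=-\tfrac{\lambda}{2}p(4-p^2)$, $C=-\tfrac{p(4-p^2)}{4}$ are real, so I first maximize over the phase of $y$ (with $|y|=1$) to obtain, writing $t=|x|$,
\[
|\Phi|\le \max_{|x|=t}\bigl|A+Bx+Cx^2\bigr|+\frac{(4-p^2)(1-t^2)}{2},
\]
and then optimize over $t\in[0,1]$ and $p\in[0,2]$. For $\lambda\notin[0,1]$ the cubic term dominates and the maximum sits at the endpoint $p=2$, where $4-p^2=0$ annihilates the $x$- and $y$-terms and $|\Phi|=|A|=2|2\lambda-1|$ is attained exactly by $p(z)=(1+z)/(1-z)$ and its rotations; here there is no inequality to lose. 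The case $0\le\lambda\le1$ is where the real work lies.

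The main obstacle is that the naive term-by-term bound $|A+Bx+Cx^2|\le|A|+|B|t+|C|t^2$ is too lossy in the regime $0\le\lambda\le1$: for instance at $\lambda=1,\ p=1,\ t=1$ it returns $2.5$, whereas the true modulus is $2$ (attained at $x=1$, where $A$ and $Bx+Cx^2$ have opposite sign and partially cancel). Hence the quadratic-in-$x$ modulus must be handled coherently: since $A,B,C$ are real with $C\le0$, the extremal $x=te^{i\phi}$ on each circle requires a genuine phase optimization (when $2\lambda-1>0$ one has $AC<0$, so the extremal $x$ need not be real), after which the resulting one-variable estimate is maximized over $t$ and $p$ — the concave quadratic maximization of the type~\eqref{l}. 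Carrying out this case analysis, I expect $|\Phi|\le2$ throughout $0\le\lambda\le1$, with equality attained by suitable members of $\mathcal{P}$ (depending on $\lambda$, e.g. $p_1=p_2=0,\ p_3=2$ at $\lambda=\tfrac12$), thereby completing the proof; the delicate bookkeeping is verifying that no interior choice of $(p,t)$ and phase overshoots the boundary value $2$.
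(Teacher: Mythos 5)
Your setup is sound as far as it goes: the rotation-invariance normalization $p_1=p\in[0,2]$, the substitution from Lemma \ref{pformula}, and the resulting identity $\Phi=\tfrac{2\lambda-1}{4}p^3-\tfrac{\lambda}{2}p(4-p^2)x-\tfrac{p(4-p^2)}{4}x^2+\tfrac{(4-p^2)(1-|x|^2)}{2}y$ are all correct (I checked the coefficients), and you rightly observe that a termwise triangle inequality is too lossy for $0\le\lambda\le1$. But note first that the paper contains no proof to compare against: it quotes the lemma from Ali's Corollary 1, whose underlying content is the nontrivial estimate $|p_3-2Bp_1p_2+Dp_1^3|\le 2$ for $0\le B\le 1$, $B(2B-1)\le D\le B$, applied with $B=(\lambda+1)/2$, $D=\lambda$. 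Your proposal does not replace that content: for the main case $0\le\lambda\le1$ you stop at ``carrying out this case analysis, I expect $|\Phi|\le 2$.'' The coherent phase optimization of $|A+Bx+Cx^2|$ on $|x|=t$ followed by the maximization over $(t,p)$ \emph{is} the lemma; deferring it leaves the proof with its heart missing.

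Second, and more seriously, your endpoint computation contradicts the statement you claim to be proving, and you did not notice. At $p=2$ (the half-plane function, $p_n=2$) the functional equals $4\lambda-2$, so $|\Phi|=2|2\lambda-1|$, which for every $\lambda\notin[0,1]$ strictly exceeds the asserted bound $2|\lambda-1|$ (at $\lambda=2$: $6>2$; at $\lambda=-1$: $6>4$). So the ``elsewhere'' case as transcribed in the paper is false --- evidently a typo for $2|2\lambda-1|$, consistent with the $|2\lambda-1|$ structure in the second inequality of Lemma \ref{2 pomi lemma} --- and your own example disproves it; the honest conclusion is to flag the misprint, not to declare ``there is no inequality to lose'' while silently conflating $2|2\lambda-1|$ with $2|\lambda-1|$. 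Moreover, even for the corrected bound, your claim that for $\lambda\notin[0,1]$ ``the cubic term dominates and the maximum sits at the endpoint $p=2$'' is unsupported: at $t=1$ the termwise bound collapses to $-\tfrac12 p^3+(2\lambda+1)p$, which for $1<\lambda<5/2$ has an interior maximum at $p^*=\sqrt{2(2\lambda+1)/3}<2$ whose value exceeds $4\lambda-2$; so endpoint domination cannot be read off from modulus bounds and requires exactly the phase-coherent analysis you postponed. As written, neither case of the lemma is actually established.
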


\subsection{Sharp Bound of the Fifth Coefficient}
The estimation of coefficient bounds of functions belonging to a subclass of $\mathcal{S}^{*}$ has always been a fundamental challenge in geometric function theory, they have a considerable impact on geometric properties, 
see \cite{goodman vol1,shelly}. 
For instance, the bound of the second coefficient determines the growth and distortion results for the class $\mathcal{S}$. We now proceed to obtain several coefficient bounds for functions in $\mathcal {S}^{*}_{\rho}$.

\begin{theorem}\label{4 sharpbounds}
Let $f(z)=z+\sum_{n=2}^{\infty}a_nz^n \in \mathcal {S}^{*}_{\rho},$ then 
		 $ |a_2|\leq 1,
		  |a_3|\leq 1/2,
		|a_4|\leq 1/3,$ and
		$|a_5|\leq 907/1632\approx 0.55576.$
	
All these bounds are sharp.
\end{theorem}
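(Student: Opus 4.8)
My plan is to use the representations (\ref{4 a2})--(\ref{4 a5}) to turn each estimate into an extremal problem for the Carath\'eodory coefficients $p_1,p_2,p_3,p_4$, exploiting that $|a_n|$ is invariant under the rotation $p(z)\mapsto p(e^{i\theta}z)$, so that $p_1$ may be taken real with $p_1=p\in[0,2]$.

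The bounds on $a_2$ and $a_3$ are immediate from $|p_n|\le 2$ (Lemma \ref{2 pomi lemma}), since $a_2=p_1/2$ and $a_3=p_2/4$; equality forces $|p_1|=2$, respectively $|p_2|=2$. For $a_4=\frac{1}{144}\bigl(24p_3-6p_1p_2-p_1^3\bigr)$ I would substitute the classical Libera--Z{\l}otkiewicz representation
\[
2p_2=p^2+(4-p^2)\zeta,\qquad 4p_3=p^3+2p(4-p^2)\zeta-p(4-p^2)\zeta^2+2(4-p^2)\bigl(1-|\zeta|^2\bigr)\eta,
\]
with $|\zeta|,|\eta|\le 1$, which reduces $144a_4$ to $2p^3+9p(4-p^2)\zeta-6p(4-p^2)\zeta^2+12(4-p^2)(1-|\zeta|^2)\eta$. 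I would maximise in $\eta$ first (it enters linearly) and then in $\zeta$; the decisive point is that the constant term, the $\zeta$--terms and the $\eta$--term cannot all be phase-aligned, so a careful estimate rather than a termwise triangle inequality is needed. The resulting bound, viewed as a function of $p$, is largest at $p=0$, where $144a_4=24p_3$ and hence $|a_4|\le 1/3$.

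For the fifth coefficient I would treat (\ref{4 a5}) the same way, now also invoking the analogous representation of $p_4$ in terms of $p$ and two further disc parameters. After eliminating the linearly appearing free parameters, the problem collapses to maximising a function $\Phi(p,t)$ on $[0,2]\times[0,1]$, where $t=|\zeta|$. Here the paper's auxiliary tools enter: the part of $\Phi$ carrying the $p_2$--dependence is a Fekete--Szeg\"o expression, to which the improved inequality (\ref{use}) applies, while the inequality $|p_{n+k}-\lambda p_np_k|\le 2$ from Lemma \ref{2 pomi lemma} controls the $p_1p_3$ and $p_4$ contributions. One then shows the maximum in $t$ is attained at an endpoint, reducing $\Phi$ to a quadratic in the variable $s=p^2\in[0,4]$, whose maximum is read off from (\ref{l}). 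The main obstacle is precisely this last optimisation: unlike $a_2,\dots,a_4$, the extremiser for $a_5$ is \emph{interior} (reflected by the non-elementary value of the bound), so it corresponds to the genuine critical-point branch $(4AC-B^2)/(4A)$ in (\ref{l}), and its location must be pinned down carefully through the case distinction there.

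Finally, for sharpness. For $a_2,a_3,a_4$ the extremal Carath\'eodory function is $p(z)=(1+z^k)/(1-z^k)$ with $k=1,2,3$, i.e.\ the Schwarz function $w(z)=z^k$, corresponding to $f\in\mathcal{S}^{*}_{\rho}$ with $zf'(z)/f(z)=1+\sinh^{-1}(z^k)$; a direct substitution gives $a_2=1$, $a_3=1/2$ and $a_4=1/3$. For $a_5$ the extremal function is the one whose parameters realise the interior maximum located above, and I would confirm sharpness by inserting that specific $p$ back into (\ref{4 a5}).
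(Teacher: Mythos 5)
Your reductions for $a_2$ and $a_3$ are fine, and your sharpness functions $w(z)=z^{n-1}$ are exactly the paper's extremal functions $f_n$. The first genuine gap is at $a_4$. Your Libera--Z{\l}otkiewicz reduction $144a_4=2p^3+9p(4-p^2)\zeta-6p(4-p^2)\zeta^2+12(4-p^2)\bigl(1-|\zeta|^2\bigr)\eta$ is algebraically correct, and you correctly sense that a termwise triangle inequality fails: at $p=1.2$, $|\zeta|=1$ the termwise sum is $2p^3+15p(4-p^2)=49.536>48$, so the claim that the bound ``is largest at $p=0$'' does not follow from the optimization you outline. But you never supply the ``careful estimate'' that replaces it --- and that phase-coupled maximization over $\zeta$ is precisely the hard content of the step. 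The paper avoids it entirely: comparing coefficients in $zf'(z)=(1+\sinh^{-1}(w(z)))f(z)$ gives $3a_4=w_3+\tfrac{3}{2}w_1w_2+\tfrac{1}{3}w_1^3$ in the Schwarz coefficients, and the Prokhorov--Szynal lemma \cite{poko} bounds this functional by $1$ at once. Without either invoking such a result or actually carrying out the two-parameter phase analysis, your $|a_4|\le 1/3$ is unproven.

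For $a_5$ your endgame is exactly right --- the problem does collapse to a quadratic in $t=|p_1|^2\in[0,4]$ whose maximum sits on the interior branch $(4AC-B^2)/(4A)$ of (\ref{l}) (the paper gets $|a_5|\le\tfrac18\bigl(\tfrac83+\tfrac{11}{12}t-\tfrac{17}{144}t^2\bigr)$, maximized at $t=66/17$ with value involving $121/68$, whence $907/1632$) --- but your route to that quadratic is incoherent as described. Once you parametrize $p_2,p_3,p_4$ via Lemma \ref{pformula}, the inequalities (\ref{use}) and $|p_{n+k}-\lambda p_np_k|\le 2$ have nothing left to ``control'': they are statements about the raw coefficients, already consumed by the parametrization, and eliminating the disc parameters leads instead to a three-variable maximization of the kind the paper only deploys for $H_{3,1}$. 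The paper's actual mechanism, which your sketch is missing, is the algebraic decomposition $a_5=-\tfrac18\bigl(\tfrac14X-\tfrac16p_1Y+\tfrac{11}{24}p_1^2Z-\tfrac34p_4\bigr)$ with $X=p_1^4-3p_1^2p_2+p_2^2+2p_1p_3-p_4$, $Y=p_1^3-2p_1p_2+p_3$, $Z=p_2-\tfrac{17}{66}p_1^2$, where (\ref{p}), (\ref{q}) give $|X|,|Y|\le 2$ and the improved Fekete--Szeg\"o bound (\ref{use}) applied to $Z$ produces the crucial $-\tfrac{17}{144}t^2$ term; the tuned constant $17/66$ that makes everything collapse to one variable is the key idea, and nothing in your plan generates it. Finally, for sharpness of $a_5$ it is not enough to ``insert that specific $p$'': you must exhibit a Carath\'eodory function attaining equality simultaneously in (\ref{p}), (\ref{q}), (\ref{use}) and $|p_4|=2$ with $|p_1|^2=66/17$, with aligned phases --- the paper at least records explicit candidate values of $p_1,\dots,p_4$, whereas your proposal leaves this existence question open.
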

\begin{proof} 
For finding the bounds of $a_2$ as in (\ref{4 a2}), we utilize the already known coefficient estimate for the Carath\'{eodory} class of functions $|p_n|\leq 2$ for $(n\geq 1).$ Thus, $|a_2|\leq 1$.\\
The bound of $a_3$ is $1/2$ as per \cite{4petalmgkhan}.\\
For $a_4,$ equation (\ref{4 schwarz}) is re-written as:
\begin{equation}
		zf\;'(z)=(1+\sinh^{-1} (w(z)))f(z). \label{4 re}
\end{equation}
On substituting $f(z)$ from (\ref{form}) and $w(z)=\sum_{k=1}^{\infty}w_{k}z^k$ in equation (\ref{4 re}) and comparing the coefficients of $z^4$, we get $$3a_4=w_3+\frac{1}{3}w_1^3+\frac{3}{2}{w_1}w_2.$$
Using \cite[Lemma 2]{poko}, we obtain that $|a_4|\leq 1/3.$\\
Now, from (\ref{4 a5}), we have 
\begin{align*}
	a_5&=\frac{-1}{8}\bigg[\frac{-5}{144}{p_{1}}^4+\frac{1}{4}{p_{2}}^2+\frac{1}{3}p_{1}p_{3}+\frac{1}{24}{p_{1}^2}p_2-p_4\bigg]\\
	&=\dfrac{-1}{8}\bigg[\dfrac{1}{4}X-\dfrac{1}{6}p_1Y+\dfrac{11}{24}p_1^2Z-\dfrac{3}{4}p_4\bigg]\\
	&\leq \dfrac{1}{8}\bigg[\dfrac{1}{4}|X|+\dfrac{1}{3}|Y|+\dfrac{7}{12}|p_1|^2|Z|+\dfrac{3}{2}\bigg|,
\end{align*}
with $X=p_1^4+p_2^2-3p_1^2p_2+2p_1p_3-p_4$, $Y=p_1^3-2p_1p_2+p_3$ and $Z=p_2-(17/66)p_1^2$. Now from equations (\ref{p})-(\ref{use}), we have $|X|\leq 2$, $|Y|\leq 2$ and $|Z|\leq 2$ respectively. Upon substituting these values in the expression of $a_5$ given above, we have
\begin{equation*}
		    |a_5|\leq \dfrac{1}{8}\bigg(\dfrac{8}{3}+\dfrac{11}{12}|p_1|^2-\dfrac{17}{144}|p_1|^4\bigg).
\end{equation*}
Now, we obtain $|11|p_1|^2/12-17|p_1|^4/144|\leq 121/68$ using the equation (\ref{l}) by taking $A=-17/144$, $B=11/12$ and $C=0,$ eventually estimates the desired bound for $a_5.$  
By taking, $p_1=p_4=2$, $p_2=-0.333333 + 1.45521i$ and $p_3=-2$, we prove the sharpness of the result. \\
For the initial coefficients $a_n(n=2,3,4)$, the following function acts as the extremal function:
	$$f_n(z)=z\exp\bigg(\int_{0}^{z}\frac{\sinh^{-1} (t^{n-1})}{t}dt\bigg).$$
\end{proof}

\begin{remark}
The bound of $a_3$ obtained in \cite{4petalmgkhan} is not sharp. So, in Theorem \ref{4 sharpbounds}, we have provided its corresponding extremal function.
\end{remark}

\subsection{Sharp Hankel Determinants}

The estimation of the Hankel determinant bounds for functions in $\mathcal{S}^{*}$ and its various subclasses in terms of $q$ and $n$, is trending these days. 
The well-known formula for $p_2$, $p_3$ and $p_4$ \cite{rj,lemma1}, listed below in Lemma \ref{pformula}, plays an important role in finding the best possible bounds of Hankel determinants, which serves as the foundation for establishing our main results.

\begin{lemma}\label{pformula}
Let $p\in \mathcal {P}$ of the form $1+\sum_{n=1}^{\infty}p_n z^n.$ Then
	\begin{equation}
		2 p_2=p_1^2+\gamma (4-p_1^2),\label{b2}
	\end{equation}
\begin{equation}
	4p_3=p_1^3+2p_1(4-p_1^2)\gamma -p_1(4-p_1^2) {\gamma}^2+2(4-p_1^2)(1-|\gamma|^2)\eta, \label{b3}
\end{equation}
and \begin{equation}
	8p_4=p_1^4+(4-p_1^2)\gamma (p_1^2({\gamma}^2-3\gamma+3)+4\gamma)-4(4-p_1^2)(1-|\gamma|^2)(p_1(\gamma-1)\eta+\bar{\gamma}{\eta}^2-(1-|\eta|^2)\rho), \label{b4}
\end{equation}
for some $\gamma$, $\eta$ and $\rho$ such that $|\gamma|\leq 1$,  $|\eta|\leq 1$ and $|\rho|\leq 1.$
\end{lemma}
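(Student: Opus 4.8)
The plan is to realize each $p\in\mathcal{P}$ through its associated Schwarz function and then parametrize that function's Taylor coefficients by the iterated Schwarz lemma. Since $\RE p>0$ and $p(0)=1$, the function $\omega=(p-1)/(p+1)$ is analytic on $\mathbb{D}$ with $\omega(0)=0$ and $|\omega(z)|<1$; conversely $p=(1+\omega)/(1-\omega)$. Writing $\omega(z)=\sum_{k\ge1}c_kz^k$, the whole lemma reduces to (i) expressing $p_1,\dots,p_4$ in terms of $c_1,\dots,c_4$, and (ii) producing a normalized parametrization of the admissible coefficient vectors $(c_1,\dots,c_4)$ of a Schwarz function.

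For step (i) I would clear denominators in $p=(1+\omega)/(1-\omega)$, i.e.\ compare coefficients in $(1+p)\,\omega=p-1$. Matching powers of $z$ up to $z^4$ gives a triangular system whose solution reads $c_1=p_1/2$, $c_2=\tfrac12(p_2-\tfrac12 p_1^2)$, and analogous (but longer) expressions for $c_3,c_4$; these relations are purely formal and invertible, so expressing the $c_k$ through $p_1,\dots,p_4$ is equivalent to the reverse.

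For step (ii) I would run the Schur (Schur--Cohn/Wall) algorithm on the contraction $\psi(z)=\omega(z)/z$, which maps $\mathbb{D}$ into $\overline{\mathbb{D}}$. Setting $\psi_0=\psi$ and $\psi_{j+1}(z)=\bigl(\psi_j(z)-\psi_j(0)\bigr)\big/\bigl(z(1-\overline{\psi_j(0)}\,\psi_j(z))\bigr)$, each Schur parameter $\zeta_j=\psi_j(0)$ satisfies $|\zeta_j|\le1$, and unwinding the recursion expresses $c_1,c_2,c_3,c_4$ as polynomials in $\zeta_0,\zeta_1,\zeta_2,\zeta_3$ carrying the characteristic contraction factors $(1-|\zeta_j|^2)$. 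Identifying $\zeta_0=c_1=p_1/2$ and renaming $\zeta_1=\gamma$, $\zeta_2=\eta$, $\zeta_3=\rho$, and using rotational invariance of $\mathcal{P}$ to normalize $p_1\in[0,2]$ so that $|p_1|^2=p_1^2$ and $1-|\zeta_0|^2=(4-p_1^2)/4$, substitution into the relations of step (i) yields exactly \eqref{b2}, \eqref{b3}, \eqref{b4} with the stated constraints $|\gamma|,|\eta|,|\rho|\le1$.

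I expect the only genuine difficulty to be the bookkeeping for \eqref{b4}: it requires three iterations of the Schur step and a careful collection of the $\gamma^2$, $\bar\gamma\eta^2$, and $(1-|\eta|^2)\rho$ contributions, where sign and coefficient errors easily creep in. The formulas \eqref{b2} and \eqref{b3} are the classical Libera--Zlotkiewicz identities and can either be quoted or checked by hand after one or two Schur steps; the substantive verification is the $p_4$ term, which I would cross-check against the cited sources \cite{rj,lemma1} and, as a sanity test, against the known functionals (such as $|p_2-\lambda p_1^2|$ and the bound $|p_4|\le2$) that these representations must reproduce.
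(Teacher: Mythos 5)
Your outline is correct, but there is nothing in the paper to compare it against: the paper states Lemma \ref{pformula} as a quoted result, with \eqref{b2}--\eqref{b3} taken from Libera--Z{\l}otkiewicz \cite{rj} and \eqref{b4} from Kwon--Lecko--Sim \cite{lemma1}, and offers no proof of its own. What you have sketched is essentially a reconstruction of the argument living in those sources. Your two steps check out: from $(1+p)\omega=p-1$ one gets $c_1=p_1/2$, $c_2=\tfrac12\bigl(p_2-\tfrac12 p_1^2\bigr)$, $c_3=\tfrac12 p_3-\tfrac12 p_1p_2+\tfrac18 p_1^3$, and inserting the Schur-parameter expansions $c_1=\zeta_0$, $c_2=(1-|\zeta_0|^2)\zeta_1$, $c_3=(1-|\zeta_0|^2)\bigl((1-|\zeta_1|^2)\zeta_2-\bar{\zeta}_0\zeta_1^2\bigr)$ with $\zeta_0=p_1/2$ reproduces \eqref{b2} and \eqref{b3} exactly, and \eqref{b4} follows by the same bookkeeping one level deeper. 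Two remarks. First, \cite{rj} does not actually run the Schur algorithm: it invokes the Carath\'eodory--Toeplitz description of the coefficient body of $\mathcal{P}$, under which, given $p_1,\dots,p_n$, the admissible values of $p_{n+1}$ fill a closed disk with explicit center and radius; that yields \eqref{b2}--\eqref{b3} almost immediately, while your Schur route is equivalent but more algorithmic and extends mechanically to all orders, which is precisely how \eqref{b4}-type formulas are generated in \cite{lemma1}. Second, patch the degenerate cases: if some $|\zeta_j|=1$ then $\psi_j$ is a unimodular constant and your recursion for $\psi_{j+1}$ is undefined; the fix is to observe that the factor $(1-|\zeta_j|^2)$ annihilates every subsequent contribution, so the remaining parameters may be assigned arbitrarily in $\overline{\mathbb{D}}$ --- this is needed to obtain the closed-disk quantifiers $|\gamma|,|\eta|,|\rho|\le 1$ as stated, e.g. for $p(z)=(1+z)/(1-z)$ where $p_1=2$ and $4-p_1^2=0$. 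Finally, you are right that the identities as printed presuppose the rotation-normalization $p_1\in[0,2]$ (otherwise $4-p_1^2$ must be read as $4-|p_1|^2$); the paper's statement omits this hypothesis but uses it throughout, so your explicit mention of it is a genuine improvement on the statement as given.
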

Now, we will determine the second-order Hankel determinant $H_{2,2}(f)$ for $f\in \mathcal{S}^{*}_{\rho}.$
\begin{theorem}
Let $f$ belongs to $\mathcal{S}_{\rho}^{*}$. Then
\begin{equation}
    |H_{2,2}(f)|\leq \frac{1}{4}.\label{4 h22}
\end{equation}
This result is the best possible.
\end{theorem}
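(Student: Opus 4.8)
The plan is to reduce $H_{2,2}(f)=a_2a_4-a_3^2$ to a functional in the Carath\'eodory coefficients and then optimize. Substituting the expressions for $a_2,a_3,a_4$ from (\ref{4 a2}) and clearing denominators gives
\begin{equation*}
288\,H_{2,2}(f)=-p_1^4-6p_1^2p_2+24p_1p_3-18p_2^2.
\end{equation*}
Since each monomial on the right has total weight $4$ under the rotation $p(z)\mapsto p(e^{i\theta}z)$ (which sends $p_n\mapsto e^{in\theta}p_n$), the quantity $|H_{2,2}(f)|$ is rotation-invariant. Hence I may assume without loss of generality that $p_1=:p\in[0,2]$, using $|p_1|\leq 2$.

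Next I would insert the representations (\ref{b2}) and (\ref{b3}) of Lemma \ref{pformula} for $p_2$ and $p_3$ in terms of $p$, $\gamma$ and $\eta$ with $|\gamma|\leq 1$, $|\eta|\leq 1$. The key simplification I expect is that the terms linear in $\gamma$ cancel identically, leaving
\begin{equation*}
288\,H_{2,2}(f)=-\frac{5}{2}p^4-(4-p^2)\Big(\frac{3}{2}p^2+18\Big)\gamma^2+12\,p(4-p^2)(1-|\gamma|^2)\eta.
\end{equation*}
Writing $x=|\gamma|\in[0,1]$ and applying the triangle inequality together with $|\eta|\leq 1$, I obtain $288\,|H_{2,2}(f)|\leq\Phi(p,x)$, where $\Phi$ is affine in $t:=x^2$ with coefficient $(4-p^2)\big(\frac{3}{2}p^2-12p+18\big)=\frac{3}{2}(4-p^2)(p-2)(p-6)$, which is nonnegative on $[0,2]$.

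Because this coefficient is nonnegative, $\Phi$ is nondecreasing in $t$, so its maximum occurs at $x=1$; a short computation then collapses the bound to the single-variable polynomial $\Phi(p,1)=p^4-12p^2+72$. On $[0,2]$ its derivative $4p(p^2-6)$ is nonpositive, so the maximum is attained at $p=0$ and equals $72$, yielding $|H_{2,2}(f)|\leq 72/288=1/4$. The extremal configuration $p=0$, $|\gamma|=1$ corresponds to $p(z)=(1+z^2)/(1-z^2)$ up to rotation, which I would use to produce the extremal function and confirm sharpness. The main obstacle I anticipate is the bookkeeping in the substitution step, namely verifying that the $\gamma$-linear terms cancel and correctly collecting the $\gamma^2$ and $\eta$ coefficients, since an error there would corrupt both the monotonicity argument and the final optimization.
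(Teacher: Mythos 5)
Your proposal is correct and follows essentially the same route as the paper: substitute $a_2,a_3,a_4$ in terms of $p_1,p_2,p_3$, normalize $p_1=p\in[0,2]$, insert the Libera--Z\l otkiewicz representations (\ref{b2})--(\ref{b3}), apply the triangle inequality, maximize first in $|\gamma|$ (maximum at $|\gamma|=1$) and then in $p$ (maximum at $p=0$), with the same extremal function $z\exp\big(\int_0^z \sinh^{-1}(t^2)/t\,dt\big)$. Your intermediate formula is in fact the accurate one: the $\gamma$-linear terms do cancel, and the coefficient of $\gamma^2$ is $-(4-p^2)\big(\tfrac{3}{2}p^2+18\big)/288=-\tfrac{1}{64}(4-p^2)^2-\tfrac{1}{48}p^2(4-p^2)$, so the paper's displayed term $\tfrac{1}{64}\gamma^2(4-p^2)$ carries a typo (it should read $(4-p^2)^2$, without which $\psi(0,1)$ would be $1/16$ rather than the correct $1/4$), and your value $\Phi(0,1)=72$, i.e.\ $|H_{2,2}(f)|\le 72/288=1/4$, is right.
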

\begin{proof}
On  substituting the values of $a_i$ for $i=2,3,4$ from (\ref{4 a2}) in (\ref{h22}), we obtain 
\begin{equation*}
    a_2a_4-a_3^2=-\frac{p_1^4}{288}-\frac{p_1^2p_2}{48}-\frac{p_2^2}{16}+\frac{p_1p_3}{12}.
\end{equation*}
We express $p_i$ for $i=2,3$ in the form of $p_1$ through equations (\ref{b2}) and (\ref{b3}). Also, without loss of generality, consider $p_1:=p$ where $0\leq p\leq 2$, we have
\begin{equation*}
    |a_2a_4-a_3^2|=\bigg|-\frac{5}{576}p^4-\frac{1}{64}\gamma^2 (4-p^2)-\frac{1}{48}p^2 \gamma^2 (4-p^2)+\frac{1}{24}p (4-p^2)(1-|\gamma|^2)\eta\bigg|.
\end{equation*}
On utilizing the triangle inequality and replacing $|\eta|\leq 1$, $|\gamma|=d$, with $d\leq 1.$ We arrive at 
\begin{align*}
    |a_2a_4-a_3^2|&=\frac{5}{576}p^4+\frac{1}{64}d^2 (4-p^2)+\frac{1}{48}p^2 d^2 (4-p^2)+\frac{1}{24}p (4-p^2)(1-d^2)\\
    &=:\psi(p,d).
\end{align*}
It is a simple exercise to show that $\psi'(p,d)\geq 0$ on $[0,1]$, so that $\psi(p,d)\leq \psi(p,1).$ On substituting $d=1$, we get
\begin{align*}
    |a_2a_4-a_3^2|&=\frac{5}{576}p^4+\frac{1}{64} (4-p^2)+\frac{1}{48}p^2 (4-p^2)\\
    &=:\psi(p,1).
\end{align*}
Also, $\psi'(p,1)<0$, and so $\psi(p,1)$ is a decreasing function. Thus, the maximum is obtained at $p=0$, given as
\begin{equation*}
    |H_{2,2}(f)|=|a_2a_4-a_3^2|\leq \frac{1}{4}.
\end{equation*}
The corresponding extremal function for equation (\ref{4 h22}) is $f:\mathbb{D}\rightarrow \mathbb{C}$ defined as
\begin{equation}
f(z)=z\exp\bigg(\int_{0}^{z}\dfrac{\sinh^{-1}(t^2)}{t}dt\bigg)=z+\dfrac{z^3}{2}+\dfrac{z^5}{8}-\cdots,\label{4 extre fkete}
\end{equation}
with $f(0)=0$ and $f'(0)=1$. It acts as an extremal function for the bounds of $|a_2a_4-a_3^2|$ for the values of $a_2=0$ and $a_3=1/2.$ 
\end{proof}

\begin{remark}\label{4 remark} \cite{4petalmgkhan} 
Let $f(z)=z^p+\sum_{n=p+1}^{\infty}a_nz^n\in \mathcal{S}^{*}_{\rho}$. Then,
\begin{equation}
    |a_{p+2}-\lambda a_{p+1}^2|\leq \frac{p}{2}\max \{1,|2\lambda-1|\},\label{4 fkete}
\end{equation}
and \begin{equation*}
|a_{p+2}|\leq \frac{p}{2}.
\end{equation*}
\end{remark}
On taking $p=\lambda=1$ in (\ref{4 fkete}), authors in \cite{4petalmgkhan} obtained the non-sharp bound given as
\begin{equation*}
    |a_3-a_2^2|\leq \frac{1}{2}.
\end{equation*}
 
The function defined in equation (\ref{4 extre fkete}) guarantees the sharpness of the upper-bound of $|a_3-a_2^2|$ for the values of $a_2=0$ and $a_3=1/2.$ \\ \\
Finding the bounds of higher order Hankel determinants for several subclasses of $\mathcal{S}^{*}(\varphi)$ is comparatively a difficult task and has recently become a matter of great interest to many researchers, see \cite{sharp,kowal,rath,lecko 1/2 bound}. So, by overcoming the difficulties, we present the sharp bound of $H_{3,1}(f)$ for functions related to the class $\mathcal{S}^{*}_{\rho}$.

\begin{theorem}\label{4 Ph31}
Let $f\in \mathcal {S}^{*}_{\rho}.$ Then 
\begin{equation}
	|H_{3,1}(f)|\leq \dfrac{1}{9}.\label{4 9.5}
\end{equation}
The result is sharp.
\end{theorem}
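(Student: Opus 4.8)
The plan is to express $H_{3,1}(f)=2a_2a_3a_4-a_3^3-a_4^2-a_2^2a_5+a_3a_5$ entirely in terms of the Carath\'{e}odory coefficients $p_1,p_2,p_3,p_4$ by substituting the formulas (\ref{4 a2})--(\ref{4 a5}). This produces a polynomial in the $p_i$ which I would then reduce using the Libera--Zlotkiewicz parametrization of Lemma \ref{pformula}, writing $p_2$, $p_3$ and $p_4$ in terms of $p_1$ together with parameters $\gamma,\eta,\rho$ with $|\gamma|,|\eta|,|\rho|\le 1$. Following the now-standard normalization I would set $p_1:=p\in[0,2]$ (permissible since the class and the functional are rotation-invariant, so $p_1$ may be taken real and nonnegative) and $|\gamma|=d$. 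After collecting terms, the modulus $|H_{3,1}(f)|$ becomes a function of the real variables $p,d$ and the auxiliary unit-disc parameters $\eta,\rho$.

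The key reduction is to eliminate $\rho$ and $\eta$ first. Since $\rho$ enters (\ref{b4}) linearly and $|\rho|\le 1$, applying the triangle inequality in $\rho$ replaces every occurrence of $\rho$ by its worst-case value, and similarly for $\eta$ after grouping the $\eta$-dependent terms; one expects the extremal configuration to force $|\rho|=1$ and then to optimize over $\eta$ on the unit circle. This yields a majorant $\Phi(p,d)$ that is an explicit (piecewise) algebraic function on the rectangle $[0,2]\times[0,1]$. I would then carry out a two-variable calculus optimization: examine $\partial\Phi/\partial d$ to locate the maximizing $d$ as a function of $p$ (typically either $d=1$ or an interior critical curve), substitute back, and finally maximize the resulting single-variable function over $p\in[0,2]$. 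The target bound $1/9$ should be attained, consistent with the extremal function being $f(z)=z\exp\!\big(\int_0^z t^{-1}\sinh^{-1}(t^3)\,dt\big)$ or the analogous map that kills $a_2,a_3,a_4$ and leaves only $a_5$, for which $H_{3,1}$ reduces to $-a_2^2a_5+a_3a_5$ evaluated at $p=0$; I would verify sharpness by exhibiting the appropriate $p$-values (likely $p_1=p_2=p_3=0$, $p_4=2$, giving $a_5=1/8$ and $|H_{3,1}|=1/8\cdot 1/2$, or the correct configuration yielding exactly $1/9$).

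The main obstacle I anticipate is the two-variable optimization after the $\rho,\eta$ elimination. The presence of the factor $(4-p^2)$ together with the quadratic and cubic dependence on $d=|\gamma|$ typically makes $\Phi(p,d)$ nonmonotone, so the boundary analysis ($d=0$, $d=1$, and interior stationary points where $\partial\Phi/\partial d=0$) must be handled carefully, and one must confirm which branch of the case analysis in (\ref{l}) or an analogous quadratic-in-$d$ maximization is active. A secondary technical point is justifying that the triangle-inequality relaxations in $\rho$ and $\eta$ are simultaneously achievable by an actual Carath\'{e}odory function, which is what guarantees that the bound is sharp rather than merely an upper estimate; this is precisely what the explicit extremal function is meant to certify. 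I would organize the write-up so that the algebraic substitution and the $\rho$-elimination occupy the first block, the $\eta$- and $d$-optimization the second, and the final $p$-maximization together with the sharpness check the last.
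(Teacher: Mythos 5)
Your strategy coincides in substance with the paper's: substitute (\ref{4 a2})--(\ref{4 a5}) into (\ref{1h3}), parametrize $p_2,p_3,p_4$ via Lemma \ref{pformula}, normalize $p_1=p\in[0,2]$ by rotation invariance, remove $\rho$ by the triangle inequality, and optimize the resulting majorant. The one structural difference is your proposed early elimination of $\eta$. The paper instead keeps $y=|\eta|$ as a genuine third variable and maximizes $M(p,x,y)$ over the full cuboid $[0,2]\times[0,1]\times[0,1]$ (interior, six faces, twelve edges), and this is not mere pedantry: because $\rho$ enters (\ref{b4}) with the factor $(1-|\eta|^2)$, the majorant contains the nonnegative term $m_4(p,x)(1-y^2)$, so it is quadratic in $y$ with leading coefficient $m_3-m_4$ of variable sign. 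Consequently neither $y=1$ nor $y=0$ dominates uniformly (on the face $x=0$ there is even an interior critical value $y_0=-5p^3/(3(32-17p^2))$ to rule out), and simply ``replacing $\eta$ by its worst-case value'' would fail to produce an upper bound on part of the domain. Your hedge that $\Phi(p,d)=\max_y M$ is only piecewise algebraic shows you anticipated this; carried out carefully, your $y$-first maximization is a legitimate repackaging of the paper's cuboid analysis, so the optimization plan itself is sound, with the maximum value $1/9$ occurring at $p=0$, $x=0$, $y=1$.

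The genuine error is in your sharpness verification. A map that ``kills $a_2,a_3,a_4$ and leaves only $a_5$'' makes $H_{3,1}$ vanish identically: every term of $2a_2a_3a_4-a_3^3-a_4^2-a_2^2a_5+a_3a_5$ contains $a_2$, $a_3$ or $a_4$, so your configuration $p_1=p_2=p_3=0$, $p_4=2$ yields $H_{3,1}=0$, not $\tfrac18\cdot\tfrac12$ (and $1/16\neq 1/9$ in any case). Equality actually comes from the $-a_4^2$ term: the function you named first, $f(z)=z\exp\big(\int_0^z \sinh^{-1}(t^3)\,t^{-1}dt\big)=z+\tfrac{z^4}{3}+\tfrac{z^7}{18}+\cdots$, corresponding to $p(z)=(1+z^3)/(1-z^3)$ (so $p_1=p_2=0$, $p_3=2$), has $a_2=a_3=a_5=0$ and $a_4=1/3$, whence $H_{3,1}=-a_4^2=-1/9$. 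Once this explicit function is exhibited, your worry about the simultaneous attainability of the triangle-inequality relaxations is moot --- sharpness is certified directly, exactly as the paper does; but as written, your equality analysis points at the wrong coefficient configuration and would not close the proof.
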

\begin{proof}  As the class $\mathcal {P}$ is rotationally invariant and $0\leq p_1\leq 2$. Suppose $p:=p_1$, then replacing the values of $a_i's(i=2,3,4,5)$ from equations (\ref{4 a2}) and (\ref{4 a5}) in the equation (\ref{1h3}), we get,
\begin{align*}H_{3,1}(f)&=\dfrac{1}{41472}\bigg(-47p^6+3p^4p_2-234p^2{p_2}^2-972{p_2}^3+528p^3p_3+1872p p_2 p_3-1152p_3^2-1296p^2p_4\\
&\quad\quad\quad\quad\quad+1296p_2p_4\bigg).
\end{align*}
	On simplification using (\ref{b2})-(\ref{b4}), we obtain
	
	$$H_{3,1}(f)=\dfrac{1}{82944}\bigg(\Gamma_1(p,\gamma)+\Gamma_2(p,\gamma)\eta+\Gamma_3(p,\gamma){\eta}^2+\phi(p,\gamma,\eta)\rho\bigg),$$
	where $\gamma,\eta,\rho\in \mathbb {D},$
\begin{align*}	\Gamma_1(p,\gamma):&=-25p^6-135{\gamma}^3p^2(4-p^2)^2+18{\gamma}^4p^2(4-p^2)^2-324{\gamma}^3(4-p^2)^2+72{\gamma}p^4(4-p^2)\\
&\quad-648{\gamma}^2p^2(4-p^2)+42p^4{\gamma}^2(4-p^2)-162p^4{\gamma}^3(4-p^2),\\
	\Gamma_2(p,\gamma):&=24(1-|\gamma|^2)(4-p^2)(10p^3+27{\gamma}p^3+(4-p^2)(18{\gamma}p-3p\gamma^2)),\\
	\Gamma_3(p,\gamma):&=72(1-|\gamma|^2)(4-p^2)(-8(4-p^2)-|\gamma|^2(4-p^2)+9p^2\bar{\gamma}),\\
\phi(p,\gamma,\eta):&=648(1-|\gamma|^2)(4-p^2)(1-|\eta|^2)(-p^2+\gamma(4-p^2)).
\end{align*}
Assuming $x=|\gamma|$, $y=|\eta|$ and employing $|\rho|\leq 1,$ we get
\begin{align*}
	|H_{3,1}(f)|\leq \dfrac{1}{82944}\bigg(|\Gamma_1(p,\gamma)|+|\Gamma_2(p,\gamma)|y+|\Gamma_3(p,\gamma)|y^2+|\phi(p,\gamma,\eta)|\bigg)\leq M(p,x,y),
\end{align*}
where 
\begin{equation}
		M(p,x,y)=\dfrac{1}{82944}\bigg(m_1(p,x)+m_2(p,x)y+m_3(p,x)y^2+m_4(p,x)(1-y^2)\bigg)\label{new}
\end{equation} 
with 
\begin{align*}
	m_1(p,x):&=25p^6+135x^3p^2(4-p^2)^2+18x^4p^2(4-p^2)^2+324x^3(4-p^2)^2+72xp^4(4-p^2)\\
	&\quad +648x^2p^2(4-p^2)+42x^2p^4(4-p^2)+162x^3p^4(4-p^2),\\
m_2(p,x):&=24(1-x^2)(4-p^2)(10p^3+27p^3x+(4-p^2)(18px+3px^2)),\\	m_3(p,x):&=72(1-x^2)(4-p^2)((8+x^2)(4-p^2)+9p^2x),\\
m_4(p,x):&=648(1-x^2)(4-p^2)(p^2+x(4-p^2)).
	\end{align*}
In the closed cuboid $U:[0,2]\times [0,1]\times [0,1]$, we must now maximise $M(p,x,y)$, by identifying the maximum values in the interior, on the twelve edges and inside $U$.
\begin{enumerate}
\item We start by taking into account, every internal point of $U$. Assume that $(p,x,y)\in (0,2)\times (0,1)\times (0,1).$ We differentiate equation (\ref{new}) partially with respect to $y$ to identify the points at which maximum is attained inside $U$. We get
\begin{align*}
	    \dfrac{\partial M}{\partial y}&=\dfrac{(4 - p^2)}{3456} (1 - x^2) \bigg(12 p x (6 + x) + p^3 (10 + 9 x - 3 x^2)+24p^2y(8-9x+x^2)\\
	    & \quad \quad \quad \quad\quad \quad\quad \quad \quad  -6 p^2y (17 - 18 x + x^2)\bigg). 
\end{align*}
Now $\dfrac{\partial M}{\partial y}=0$ gives
\begin{equation*}
	    y=y_0:=\dfrac{10 p^3 + 72 p x + 9 p^3 x + 12 p x^2 - 3 p^3 x^2}{6 (x-1) (32 - 17 p^2 - 4 x + p^2 x)}.
\end{equation*}
The existence of the critical points requires that $y_0$ should belong to $(0,1),$ which is only possible when 
\begin{equation}
	    p^3 (10 + 9 x - 3 x^2) + 24 (8 - 9 x + x^2)-6 p^2 x^2 + 12 p x (6 + x)<6 p^2 (17 - 18 x).\label{4 h1}
\end{equation}
Next, we determine the values satisfying the inequality (\ref{4 h1})
for the existence of critical points using hit and trial method. If we assume $p$ tends to 2, then equation (\ref{4 h1}) holds for all $x<17/54$ but there does not exist any $x\in (17/54,1)$ satisfying the equation (\ref{4 h1}). When $p$ tends to 0, there does not exist any $x\in (0,1)$ satisfying equation (\ref{4 h1}). Similarly, if we assume $x$ tends to 0, then equation (\ref{4 h1}) holds for all $p>1.48422$. On calculations, we observe that there does not exist any $x\in (0,1)$ when $p\in (0,1.48422).$ If $x$ tends to 1, then no $p\in (0,2)$ satisfies equation (\ref{4 h1}). 
Thus, the domain for the solution of equation (\ref{4 h1}) is $(1.48422,2)\times (0,17/54).$ Further calculations depicts that $\frac{\partial M}{\partial p}|_{y=y_0}\neq 0$ in the domain, $(1.48422,2)\times (0,17/54).$
Hence, we conclude that the function $M$ has no critical point in $(0,2)\times (0,1)\times (0,1).$ 
\item The interior of each of the cuboid $U$'s six faces is now being considered.\\
For $p=0$,
\begin{equation}
	        n_1(x,y):=\dfrac{(1 - x^2) ((8 + x^2) y^2 + 9 x (1 - y^2))}{72},\label{4 9.4}
\end{equation}
with $x,y\in (0,1)$. We observe that $(0,1)\times(0,1)$ does not contain any critical point of $n_1$ as 
\begin{equation*}
    \dfrac{\partial n_1}{\partial y}=\dfrac{y(x-8)(1-x^2)(x-1)}{36}\neq 0,\quad x,y\in (0,1).
\end{equation*}
For $p=2$,
\begin{equation}
    M(2,x,y):=\dfrac{25}{1296},\quad x,y\in (0,1).\label{4 9.3}
\end{equation}
For $x=0$, 
\begin{equation}
    n_2(p,y):=\dfrac{25 p^6 + (4 - p^2)(240 p^3  y + 576 (4 - p^2) y^2 + 
 648 p^2  (1 - y^2))}{82944}\label{4 9.1}
\end{equation}
for $p\in (0,2)$ and $y\in (0,1).$ To determine the points of maxima, solve $\partial n_2/\partial p$ and $\partial n_2/\partial y$ to examine the values of maxima. After resolving $\frac{\partial n_2}{\partial y}=0,$ we get
\begin{equation}
    y=-\dfrac{5p^3}{3(32-17p^2)}(=:y_0).\label{4 y}
\end{equation}
In order to have $y_0\in (0,1),$ for the given range of $y$, $p>p_0\approx 1.48422$ is needed. Based on the calculations, $\frac{\partial n_2}{\partial p}=0$ gives
\begin{equation}
    5184 p - 2592 p^3 + 150 p^5 + 2880 p^2 y - 1200 p^4 y - 14400 p y^2 + 
 4896 p^3 y^2=0.\label{4 9}
\end{equation}
On substituting equation (\ref{4 y}) in equation (\ref{4 9}), we get
\begin{equation}
    294912 p -460800 p^3 + 239904 p^5 - 44816 p^7 + 1275 p^9=0.\label{4 40}
\end{equation}
Based on calculations, the solution of (\ref{4 40}) is $p\approx 1.20623$ in the interval $(0,2)$. So, in $(0,2)\times (0,1)$, $n_2$ does not have any critical point.\\
For $x=1$, 
\begin{equation}
    n_3(p,y):=M(p,1,y)=\dfrac{(2592+1224p^2-222p^4-49p^6)}{41472}, \quad p\in (0,2),\label{4 9.2}
\end{equation}
and $p=p_0:\approx1.32162$ comes out to be the critical point at which $n_3$ achieves its maximum, approximately $0.0914236$, while computing $\partial n_3/\partial p=0.$\\ 
For $y=0$,
\begin{align*}
    n_4(p,x):&=\dfrac{1}{82944}\bigg(p^6 (25 - 42 x^2 - 27 x^3 + 18 x^4)-288 (-36 x + 18 x^3 - xp^4)\\
    &\quad \quad \quad \quad\quad  -12 p^4 (54 - 54 x - 14 x^2 + 63 x^3 + 12 x^4)\\
    &\quad \quad\quad \quad\quad+72 p^2 (36 - 72 x + 66 x^3 + 4 x^4 - xp^4)\bigg).
\end{align*}
On computations,
\begin{align*}
    \dfrac{\partial n_4}{\partial x}&=\dfrac{1}{82944}\bigg( 12 p^4 (54 + 28 x - 189 x^2 - 48 x^3) + p^6 (-84 x - 81 x^2 + 72 x^3) \\
    &\quad \quad\quad \quad\quad -288 (-36 + 54 x^2) + 72 p^2 (-72 + 198 x^2 + 16 x^3)\bigg)
\end{align*}
and \begin{align*}
    \dfrac{\partial n_4}{\partial p}&=\dfrac{p}{13824}\bigg(p^4 (25 - 42 x^2 - 27 x^3 + 18 x^4)-8 p^2 (54 - 54 x - 14 x^2 + 63 x^3 + 12 x^4) \\&\quad\quad\quad\quad \quad+ 
 24  (36 - 72 x + 66 x^3 + 4 x^4 - xp^4) \bigg).
\end{align*}
in $(0,2)\times (0,1)$, no solution exists for equations, $\partial n_4/\partial x=0$ and $\partial n_4/\partial p=0$, as per the calculations.\\
For $y=1$, 
\begin{align*}
    n_5(p,x):&=\frac{1}{82944}\bigg(25 p^6 +(4 - p^2)( 72 p^4  x + 648 p^2 x^2 +42 p^4 x^2+162 p^4 x^3  \\
& \quad\quad\quad\quad\quad+18 p^2 (4 - p^2) x^4 +72 (1 - x^2) (9 p^2 x+ (4 - p^2) (8 + x^2)) \\
 &\quad\quad\quad\quad\quad+ 135 p^2 (4 - p^2) x^3 + 324 (4 - p^2) x^3 +288px (1 - x^2) (6 + x) \\
 &\quad\quad\quad\quad\quad+24p^3(1-x^2) (10 + 9 x - 3 x^2))\bigg).
\end{align*}
The two equations $\partial n_5/\partial x=0$ and $\partial n_5/\partial p=0$ do not assume any solution in $(0,2)\times (0,1).$

\item Now, the edges of the cuboid $U$ are taken into consideration to identify the maximum possible values achieved by $M(p,x,y)$. From equation (\ref{4 9.1}), we have $M(p,0,0)=r_1(p):=(25p^6+2592p^2-648p^4)/82944.$ We observe that $d r_1/dp=0$ for $p=\delta_0:=0$ and $p=\delta_1:=1.51933$ in $[0,2]$ as points of minima and maxima respectively. 
Hence, 
\begin{equation*}
    M(p,0,0)\leq 0.0342145.
\end{equation*}
Now considering the equation (\ref{4 9.1}) at $y=1,$ we get $M(p,0,1)=r_2(p):=(9216 - 4608 p^2 + 3168 p^4 - 623 p^6)/82944.$ It is easy to observe that $d r_2/dp$ decreases in $[0,2]$ and maximum value is achieved at the point $p=0$. So,
\begin{equation*}
     M(p,0,1)\leq \dfrac{1}{9}, \quad p\in [0,2].
    \end{equation*}
Through computations, equation (\ref{4 9.1}) shows that $y=1$ is the point at which $M(0,0,y)$ assumes its maximum value. This implies that
\begin{equation*}
    M(0,0,y)\leq \dfrac{1}{9}, \quad y\in [0,1].
\end{equation*}
As the expression in (\ref{4 9.2}) does not involve $x$, which implies $M(p,1,1)=M(p,1,0)=r_3(p):=(2592+1224p^2-222p^4-49p^6)/41472.$ Now, $r_3'(p)=2448 p - 888 p^3 - 294 p^5=0$ when $p=\delta_2:=0$ and $p=\delta_3:=1.32162$ in the interval $[0,2]$ with $\delta_2$ and $\delta_3$ as points of minima and maxima respectively. Therefore
\begin{equation*}
    M(p,1,1)=M(p,1,0)\leq 0.0914236,\quad p\in [0,2].
\end{equation*}
Equation (\ref{4 9.2}) reduces to $M(0,1,y)=1/16$ at $p=0$. The equation (\ref{4 9.3}) is completely free from $p$, $x$ and $y$. Thus the maximum value of $M(p,x,y)$ on the edges $p=2, x=0; p=2, x=1; p=2, y=0$ and $p=2, y=1,$ respectively, is
\begin{equation*}
    M(2,0,y)=M(2,1,y)=M(2,x,0)=\dfrac{25}{1296},\quad x,y\in [0,1].
\end{equation*}
Equation (\ref{4 9.1}) implies that, $M(0,0,y)=y^2/9.$ Further, it implies that
\begin{equation*}
    M(0,0,y)\leq \dfrac{1}{9},\quad y\in [0,1].
    \end{equation*}
Through equation (\ref{4 9.4}), we obtain $M(0,x,1)=r_4(x):=(8-7x^2-x^4)/72.$ Upon calculations, we see that, $r_4$ is decreasing in $[0,1]$ and obtain its maximum at $x=0$. Hence
 \begin{equation*}
     M(0,x,1)\leq \dfrac{1}{9},\quad x\in [0,1].
 \end{equation*}
From equation (\ref{4 9.4}), we get $M(0,x,0)=r_5(x):=x(1-x^2)/8.$ On further calculations, we get $r_5'(x)=0$ at $x=1/\sqrt{3}.$ Also, $r_5(x)$ increases in $[0,1/\sqrt{3})$ and decreases in $(1/\sqrt{3},1].$ So, $1/\sqrt{3}$ is the point of maxima. Thus
 \begin{equation*}
     M(0,x,0)\leq 0.0481125,\quad x\in [0,1].
 \end{equation*}
\end{enumerate}
The inequality (\ref{4 9.5}) holds in all the cases as discussed above. The function $f:\mathbb{D}\rightarrow \mathbb{C}$ defined as
\begin{equation*}
f(z)=z\exp\bigg(\int_{0}^{z}\dfrac{\sinh^{-1}(t^3)}{t}dt\bigg)=z+\dfrac{z^4}{3}+\dfrac{z^7}{18}+\cdots,\label{4 extremal}
\end{equation*}
with $f(0)=0$ and $f'(0)=1$,
proves the sharpness for the bounds of $|H_3(1)|$ for the values of $a_2=a_3=a_5=0$ and $a_4=1/3.$
\end{proof}
Following the technique as in Theorem \ref{4 Ph31}, we conclude the bound of $H_{2,3}:$
\begin{corollary}
For $f\in \mathcal{S}^{*}_{\rho},$
\begin{equation*}
    |H_{2,3}(f)|\leq 0.146048.
\end{equation*}
\end{corollary}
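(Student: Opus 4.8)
The plan is to follow the proof of Theorem \ref{4 Ph31} step for step, since $H_{2,3}(f)=a_3a_5-a_4^2$ differs from $H_{3,1}(f)$ only in the combination of coefficients being formed. First I would substitute the expressions for $a_3$, $a_4$ and $a_5$ from (\ref{4 a2}) and (\ref{4 a5}) into $a_3a_5-a_4^2$, obtaining (after clearing denominators) a polynomial in $p_1,p_2,p_3,p_4$ of the same weighted degree as the one appearing in Theorem \ref{4 Ph31}. Exploiting the rotational invariance of $\mathcal{P}$, I set $p_1:=p$ with $0\le p\le 2$, and then invoke Lemma \ref{pformula} to rewrite $p_2$, $p_3$ and $p_4$ in terms of $p$, $\gamma$, $\eta$, $\rho$ through (\ref{b2})--(\ref{b4}). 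Collecting terms casts $H_{2,3}(f)$ into the shape $\Gamma_1(p,\gamma)+\Gamma_2(p,\gamma)\eta+\Gamma_3(p,\gamma)\eta^2+\phi(p,\gamma,\eta)\rho$ up to a numerical prefactor, exactly as was done for $H_{3,1}$.

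Next, applying the triangle inequality with $|\rho|\le 1$ and writing $x=|\gamma|$, $y=|\eta|$, I would bound $|H_{2,3}(f)|$ above by a function $M(p,x,y)$ on the closed cuboid $U=[0,2]\times[0,1]\times[0,1]$, which is again quadratic in $y$ once the $\rho$-term is absorbed via the factor $(1-y^2)$. The substance of the argument is then the maximization of $M$ over $U$: I would first search the interior by solving $\partial M/\partial y=0$ for $y=y_0(p,x)$, determine the subregion of $(p,x)$ for which $y_0\in(0,1)$, and verify that $\partial M/\partial p$ does not vanish there, ruling out interior critical points. I would then treat each of the six faces ($p=0$, $p=2$, $x=0$, $x=1$, $y=0$, $y=1$) and finally the twelve edges, where the problem reduces to one-variable calculus and where the elementary maximum formula (\ref{l}) can be applied directly to several of the edge expressions.

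The main obstacle I anticipate is computational rather than conceptual. Because the asserted bound $0.146048$ is not a clean rational number, the governing maximum is presumably attained at a critical point whose location is available only numerically, as the root of a high-degree univariate polynomial in $p$ of the kind produced by (\ref{4 40}) in Theorem \ref{4 Ph31}. The delicate bookkeeping will be to identify which face or edge delivers this value and to confirm that every competing local maximum on the remaining faces and edges is strictly smaller. Finally, since the coefficients are weighted differently in $a_3a_5-a_4^2$ than in $H_{3,1}$, I do not expect the extremal configuration to coincide with the $a_2=a_3=a_5=0$, $a_4=1/3$ point that was sharp for Theorem \ref{4 Ph31}; accordingly the bound here is stated only as an inequality and need not be sharp.
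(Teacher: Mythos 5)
Your proposal is correct and coincides with the paper's approach: the paper proves this corollary solely by invoking the same cuboid-maximization technique of Theorem \ref{4 Ph31} (substituting $a_3,a_4,a_5$, rewriting via Lemma \ref{pformula}, and maximizing $M(p,x,y)$ over $[0,2]\times[0,1]\times[0,1]$), giving no further details. Your outline, including the expectation that the numerical bound $0.146048$ arises from a critical point found numerically and that sharpness is not claimed, matches the paper's (terse) argument.
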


\section{Fourth Hankel Determinant}
The expression for the fourth order Hankel determinant for the particular choices of $q$ and $n$ is as follows:
\begin{equation}
    H_{4,1}(f)=a_7H_3(1)-a_6\Omega_1+a_5\Omega_2-a_4\Omega_3\label{4 4hankelexpression}
\end{equation}
where 
\begin{equation}
    \Omega_1:=a_3a_6-a_4a_5-a_2(a_2a_6-a_3a_5)+a_4(a_2a_4-a_3^2),\label{4 omega1}
\end{equation}
\begin{equation}
    \Omega_2:=(a_4a_6-a_5^2)-a_2(a_3a_6-a_4a_5)+a_3(a_3a_5-a_4^2),\label{4 omega2}
\end{equation}	
and \begin{equation}
    \Omega_3:=a_2(a_4a_6-a_5^2)-a_3(a_3a_6-a_4a_5)+a_4(a_3a_5-a_4^2)\label{4 omega3}.
\end{equation}	
Now, we try to estimate the values of $\Omega_1$, $\Omega_2$, and $\Omega_3$.
By using the values of $a_i$'s as a substitute in equation (\ref{4 omega1}), we get
\begin{align*}
 460800\Omega_1&=241 p_1^7 - 1186 p_1^5 p_2+ 720 p_1^2 p_2 p_3 - 11520 p_1^2 p_5+11520 p_2 p_5+ 5280 p_1 p_2 p_4 - 600 p_1 p_2^3\\
 &\quad+ 920 p_1^3 p_2^2 - 9120 p_2^2 p_3 +4720 p_1^3 p_4- 9600 p_3 p_4+ 9600 p_1 p_3^2-1000 p_1^4 p_3 .   
\end{align*}
By applying triangle inequality and Lemma \ref{2 pomi lemma} in the above-mentioned equation, we reach at the following inequality
\begin{align*}
    460800|\Omega_1|&\leq |p_1^5(241 p_1^2 - 1186  p_2)|+ |p_1^2(720  p_2 p_3 - 11520 p_5)|+|p_2(11520  p_5+ 5280 p_1  p_4)|+ 600 |p_1 p_2^3|\nonumber \\
 &\quad+|p_2^2( 920 p_1^3  - 9120  p_3)| +|p_4(4720 p_1^3 - 9600 p_3)|+|p_1 p_3( 9600 p_3  -1000 p_1^3)| \nonumber\\
 &\leq 265984+32000\sqrt{\frac{3}{43}}+37760\sqrt{\frac{30}{61}}+29184\sqrt{\frac{285}{41}}.
\end{align*}
Thus \begin{equation}
    |\Omega_1|\leq \frac{265984+32000\sqrt{\frac{3}{43}}+37760\sqrt{\frac{30}{61}}+29184\sqrt{\frac{285}{41}}}{460800}\approx 0.820011.\label{4 omega1value}
\end{equation}

On similar pattern
\begin{align*}
 460800\Omega_2&=-77 p_1^8 + 692 p_1^6 p_2+ 11200 p_1^3 p_2 p_3- 61440 p_2 p_3^2 +23040 p_1 p_2^2 p_3 - 5540 p_1^4 p_2^2\\
 &\quad+ 61440 p_3 p_5- 2560 p_1^3 p_5+ 53760 p_1 p_3 p_4 - 4640 p_1^4 p_4 + 2048 p_1^5 p_3- 19200 p_1^2 p_3^2\\
 &\quad+ 18240 p_1^2 p_2 p_4-61440 p_1 p_2 p_5+57600 p_2^2 p_4- 57600 p_4^2  - 4800 p_1^2 p_2^3 - 10800 p_2^4.   
\end{align*}
Lemma  \ref{2 pomi lemma} and triangle inequality leads us to
\begin{align*}
 3686400|\Omega_2|&\leq |p_1^6(-77 p_1^2 + 692  p_2)|+  |p_2 p_3(11200p_1^3 - 61440 p_3)|+|p_1 p_2^2(23040  p_3- 5540 p_1^3)|\\
 &\quad+|p_5( 61440 p_3  - 2560 p_1^3 )|+|p_1p_4( 53760 p_3 - 4640 p_1^3)| +|p_1^2p_3( 2048 p_1^3- 19200 p_3)|\\
 &\quad+|p_1 p_2 ( 18240 p_1 p_4-61440 p_5)|+| p_4(57600 p_2^2- 57600 p_4)|+|p_2^3(-4800 p_1^2-10800 p_2)|\\
 &\leq 1136896 + 716800 \sqrt{\frac{3}{157}} + 81920 \sqrt{\frac{3}{67}} + 212736 \sqrt{\frac{2}{35}} + 
 74240 \sqrt{\frac{21}{307}}+ 10240\sqrt{\frac{6}{23}}.
\end{align*}
Thus \begin{align}
    |\Omega_2|&\leq \frac{1136896 + 716800 \sqrt{\frac{3}{157}} + 81920 \sqrt{\frac{3}{67}} + 212736 \sqrt{\frac{2}{35}} + 
 74240 \sqrt{\frac{21}{307}}+ 10240\sqrt{\frac{6}{23}}}{3686400}\nonumber\\
 &\quad\approx 0.360465.\label{4 omega2value}
\end{align}

Again, we have
\begin{align*}
 597196800\Omega_3&=-1537 p_1^9 + 5868 p_1^7 p_2+ 
 4976640 p_1 p_3 p_5 -207360 p_1^4 p_5- 42876 p_1^5 p_2^2  - 
 51840 p_1^2 p_2^2 p_3\\
 &\quad + 1244160 p_1^2 p_3 p_4-246240 p_1^5 p_4+ 622080 p_2^3 p_3- 449280 p_1^3 p_2^3+ 288 p_1^6 p_3- 172800 p_1^3 p_3^2 \\
 &\quad+ 
 596160 p_1^3 p_2 p_4+6220800 p_2 p_3 p_4+ 734400 p_1^4 p_2 p_3 - 2903040 p_1 p_2 p_3^2+ 2177280 p_1 p_2^2 p_4  \\
 &\quad-97200 p_1 p_2^4- 3732480 p_2^2 p_5- 1244160 p_1^2 p_2 p_5- 2764800 p_3^3 - 4665600 p_1 p_4^2  .   
\end{align*}
By applying Lemma \ref{2 pomi lemma} along with triangle inequality in the above equation, we arrive at the following:
\begin{align*}
597196800|\Omega_3|&\leq |p_1^7(-1537 p_1^2 + 5868  p_2)|+|p_1p_5(4976640 p_3-207360 p_1^3)|+|p_1^2 p_2^2(-42876 p_1^3-51840 p_3)|\\
&\quad +|p_1^2p_4(1244160p_3-246240 p_1^3)|+ |p_2^3(622080p_3-449280p_1^3)|+|p_1^3p_3(288 p_1^3 - 172800 p_3)| \\
 &\quad+| p_2 p_4(596160 p_1^3+6220800 p_3)|+|p_1 p_2 p_3(734400 p_1^3 - 2903040p_3)|+ 4665600 |p_1 p_4^2|\\
 &\quad+ |p_1p_2^2(2177280p_4  -97200p_2^2)|+|p_2p_5 (-3732480 p_2- 1244160p_1^2)|+ 2764800 |p_3^3|\\
 &\leq 221394816 + 55296000  \sqrt{\frac{6}{599}}+ 79626240 \sqrt{\frac{6}{77}}+ 
 185794560 \sqrt{\frac{21}{251}}+ 79626240 \sqrt{\frac{6}{23}}\\
 &\quad+ 5971968 \sqrt{10}.
\end{align*}
Thus \begin{align}
    |\Omega_3|&\leq \frac{ 221394816 + 55296000  \sqrt{\frac{6}{599}}+ 79626240 \sqrt{\frac{6}{77}}+185794560 \sqrt{\frac{21}{251}}+ 79626240 \sqrt{\frac{6}{23}}+ 5971968 \sqrt{10}}{597196800}\nonumber\\
 &\quad\approx 0.606922.\label{4 omega3value}
\end{align}
Next, we obtain the bounds of the sixth and seventh coefficient of functions related to the class $\mathcal{S}^{*}_{\rho}$, as they play a significant role in obtaining the bound of $H_{4,1}$, given as follows:
\begin{lemma}\label{4 a6a7bound}
Let $f(z)=z+\sum_{n=2}^{\infty}a_nz^n \in \mathcal {S}^{*}_{\rho},$ then $|a_6|\leq146/225\approx 0.648889$ and $|a_7|\leq 1031/1080\approx 0.95463.$
\end{lemma}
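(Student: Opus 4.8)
The plan is to work directly from the explicit representations of $a_6$ and $a_7$ in (\ref{4 a6}) and (\ref{4 a7}) as polynomials in $p_1,\dots,p_5$, and to bound each of these polynomials by the triangle inequality after regrouping its monomials so that every resulting block is a single coefficient $p_n$ (controlled by $|p_n|\le 2$) times a factor covered by one of the coefficient functionals already recorded in Lemma \ref{2 pomi lemma}, Lemma \ref{p1cubelemma}, or the refined Fekete--Szeg\"o inequality (\ref{use}). All the estimates involved are rotation invariant, so no normalisation of $\arg p_1$ is required; the entire argument is bookkeeping of which pairing of monomials makes each bracket fall into the admissible range $0\le\lambda\le1$ of the relevant inequality, followed by summation of the resulting constants.

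For $a_6$ I would split the numerator $28800\,a_6$ in (\ref{4 a6}) as
\[
\big(2880 p_5-1680 p_2 p_3\big)+\big({-}1080 p_1 p_4\big)+\big({-}54 p_1^5+355 p_1^3 p_2+150 p_1 p_2^2\big).
\]
The first bracket equals $2880\big(p_5-\tfrac{7}{12}p_2 p_3\big)$, so the second estimate of Lemma \ref{2 pomi lemma} with $\lambda=7/12\in[0,1]$ gives $|2880 p_5-1680 p_2 p_3|\le 2\cdot 2880$. The middle term is at most $1080\,|p_1||p_4|\le 1080\cdot 4$, and the last bracket is estimated monomial by monomial using only $|p_n|\le2$, giving $54\cdot 2^5+355\cdot 2^4+150\cdot 2^3$. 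Adding the three contributions yields $28800\,|a_6|\le 18688$, that is $|a_6|\le 146/225$, which is exactly the claimed value; thus for $a_6$ the simplest grouping already suffices and no optimisation over $|p_1|$ is needed.

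For $a_7$ I would follow the same philosophy on (\ref{4 a7}), pairing each ``top'' product with a lower-order companion: the $p_5$-term $-69120 p_1 p_5$ combines with $33120 p_1 p_2 p_3$ into $-69120\,p_1\big(p_5-\tfrac{23}{48}p_2 p_3\big)$ (again Lemma \ref{2 pomi lemma}, $\lambda=23/48$); the $p_4$-terms $4320 p_1^2 p_4-108000 p_2 p_4$ factor as $p_4$ times a Fekete--Szeg\"o expression; the $p_3$-heavy monomials, including $19200 p_1^3 p_3$ and $-57600 p_3^2$, are handled by $|p_n|\le2$ together with the product inequality; and the purely $p_1,p_2$ block $1031 p_1^6-17220 p_1^4 p_2+26100 p_1^2 p_2^2+9000 p_2^3$ is grouped through the functional $p_2-\lambda p_1^2$. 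Because several of these brackets carry a factor $|p_1|^2$, I expect that pushing the estimate down to the stated constant $1031/1080$ will require the \emph{refined} inequality (\ref{use}) rather than the crude $|p_n|\le2$, after which the remaining expression becomes a quadratic in $t=|p_1|^2\in[0,4]$ whose maximum is read off from (\ref{l}); this is where the rational value $1031/1080$ should crystallise.

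The main obstacle is precisely this last step for $a_7$. Unlike $a_6$, a naive monomial-by-monomial estimate overshoots $1031/1080$, while the most aggressive Fekete--Szeg\"o pairings undershoot it, so the correct partition of the ten monomials is a genuine choice: one must select the grouping that (i) places every bracket in the valid range of its inequality and (ii) leaves a residual function of $t=|p_1|^2$ whose maximum over $[0,4]$, computed via (\ref{l}), lands on the advertised constant. Verifying the sixth- and seventh-coefficient arithmetic and confirming that each $\lambda$ indeed lies in $[0,1]$ is the only delicate part; everything else is routine triangle-inequality bookkeeping.
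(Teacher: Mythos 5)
Your $a_6$ argument is correct and is essentially the paper's: the pairing $2880\big(p_5-\tfrac{7}{12}p_2p_3\big)$ with $\lambda=7/12$ is exactly the paper's first block, and your monomial-by-monomial treatment of $-54p_1^5+355p_1^3p_2+150p_1p_2^2$ gives $1728+5680+1200=8608$, the same value the paper gets from its grouping $150p_1p_2\big(p_2+\tfrac{355}{150}p_1^2\big)$ — unsurprisingly, since for negative $\lambda$ the bound $|p_2-\lambda p_1^2|\le 2|2\lambda-1|$ collapses to the plain triangle inequality. So $28800|a_6|\le 5760+4320+8608=18688$, i.e. $|a_6|\le 146/225$, as claimed.

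For $a_7$, however, your proposal is a plan rather than a proof, and the one step you leave open is precisely the step your anticipated tools cannot close. Fix your concrete groupings: $69120|p_1|\,|p_5-\tfrac{23}{48}p_2p_3|\le 276480$ (identical to the paper), $108000|p_4|\,|p_2-\tfrac{1}{25}p_1^2|\le 432000$ (this is actually \emph{better} than the paper, whose rearrangement silently drops the term $4320p_1^2p_4$), $19200|p_1^3p_3|\le 307200$ and $57600|p_3^2|\le 230400$. Since $2073600\cdot\tfrac{1031}{1080}=1979520$, the remaining block $1031p_1^6-17220p_1^4p_2+26100p_1^2p_2^2+9000p_2^3$ must be bounded by $733440$. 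Your proposed route — refined inequality (\ref{use}) followed by (\ref{l}) — fails numerically: with $\lambda=1031/17220\le 1/2$, inequality (\ref{use}) gives $17220\,t^2(2-\lambda t)=34440t^2-1031t^3\le 485056$ at $t=|p_1|^2=4$ (note this is a cubic in $t$, so (\ref{l}), which handles quadratics, does not even apply as stated), and adding $|p_2^2(9000p_2+26100p_1^2)|\le 489600$ yields $974656>733440$. What does work is either the paper's move, which you never consider: feed $19200p_1^3p_3$ into Lemma \ref{p1cubelemma} by writing $1031p_1^3-17220p_1p_2+19200p_3=16189\big(\lambda p_1^3-(\lambda+1)p_1p_2+p_3\big)+3011p_3$ with $\lambda=1031/16189\in(0,1)$, so that $|p_1^3(\cdots)|\le 8(2\cdot 16189+2\cdot 3011)=307200$; or, staying closer to your sketch, factor $1031p_1^4-17220p_1^2p_2+26100p_2^2=26100(p_2-\mu p_1^2)(p_2-\nu p_1^2)$ with both roots $\mu\approx 0.593$, $\nu\approx 0.067$ in $(0,1)$, whence the block is at most $4\cdot 104400+72000=489600$. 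Either completion gives a total around $1735680/2073600\approx 0.837$, comfortably below $1031/1080$. Finally, your framing that the constant $1031/1080$ must ``crystallise'' exactly is a misconception: the lemma is not claimed sharp, and indeed the paper's own computation ends at $1791448/2073600\approx 0.8639$ (about $0.8806$ once the dropped $4320p_1^2p_4$ term is restored), strictly below the stated bound — so ``undershooting'' is not a defect but a proof, and your worry on that score dissolves.
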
	

\begin{proof}
To find the bound of $|a_6|,$ we rearrange the terms of the equation (\ref{4 a6}) and use the triangle inequality such that:
\begin{align}
    28800|a_6|&=|-54 p_1^5 + 355 p_1^3 p_2 + 150 p_1 p_2^2 - 1680 p_2 p_3 - 1080 p_1 p_4 +
 2880 p_5|\nonumber\\
 &=\bigg|2880\bigg(p_5-\frac{1680}{2880}p_2p_3\bigg)+150p_1p_2\bigg(p_2+\frac{355}{150}p_1^2\bigg)-54 p_1^5 - 1080 p_1 p_4\bigg|\nonumber\\
 &\leq 2880\bigg|p_5-\frac{1680}{2880}p_2p_3\bigg|+150\bigg|p_1p_2\bigg(p_2+\frac{355}{150}p_1^2\bigg)\bigg|+54|p_1^5|+1080|p_1 p_4|.\label{4 lemmaa6}
\end{align}
Using Lemma \ref{2 pomi lemma} in equation (\ref{4 lemmaa6}),
\begin{equation}
    2880\bigg|p_5-\frac{1680}{2880}p_2p_3\bigg|\leq 5760, \quad 150\bigg|p_1p_2\bigg(p_2+\frac{355}{150}p_1^2\bigg)\bigg|\leq 6880,\quad  54|p_1^5|\leq 1728\label{4 a6cal1}
\end{equation}
and \begin{equation}
   1080|p_1 p_4|\leq 4320.\label{4 a6cal2}
\end{equation}
From equation (\ref{4 lemmaa6})-(\ref{4 a6cal2}), we obtain
\begin{equation*}
    |a_6|\leq \frac{18688}{2880}\approx 0.648889.
\end{equation*}
Thus we get the desired estimate of $|a_6|.$\\
Next, we suitably rearrange the terms given in equation (\ref{4 a7}) as
\begin{align*}
    2073600 a_7&=p_1^3( 1031 p_1^3 - 17220 p_1 p_2+19200p_3)+p_2^2( 9000 p_2+26100p_1^2)   \\
 &\quad+p_1(33120p_2p_3-69120p_5)
  - 57600 p_3^2-108000 p_2 p_4
\end{align*}
or
\begin{align}
   2073600 |a_7|&\leq |p_1^3( 1031 p_1^3 - 17220 p_1 p_2+19200p_3)|+|p_2^2( 9000 p_2+26100p_1^2)| \nonumber  \\
 &\quad+|p_1(33120p_2p_3-69120p_5)|
  + 57600 |p_3^2|+108000 |p_2 p_4|. \label{4 a7cal}
\end{align}
Now, using Lemma \ref{2 pomi lemma} and Lemma \ref{p1cubelemma} in equation (\ref{4 a7cal}), we obtain
\begin{equation}
|p_1^3( 1031 p_1^3 - 17220 p_1 p_2+19200p_3)|\leq 362968,\quad   |p_2^2( 9000 p_2+26100p_1^2)| \leq 489600  \label{4 a7cal2}
\end{equation}
\begin{equation}
  |p_1(33120p_2p_3-69120p_5)|\leq 276480,  \quad 57600 |p_3^2|\leq 230400, \quad \text{and} \quad 108000 |p_2 p_4|\leq 432000.\label{4 a7cal3}
\end{equation}
After inserting the values obtained from equations (\ref{4 a7cal2}) and (\ref{4 a7cal3}) in equation (\ref{4 a7cal}), we arrive at the desired bound of $|a_7|$ as
\begin{equation*}
    |a_7|\leq \frac{1791448}{2073600}\approx 0.863931.
\end{equation*}
\end{proof}

\begin{theorem}
Let $f\in \mathcal{S}^{*}_{\rho}$. Then
\begin{equation*}
    |H_{4,1}(f)|\leq 0.428001.
\end{equation*}
\end{theorem}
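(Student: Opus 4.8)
The plan is to start from the cofactor expansion (\ref{4 4hankelexpression}), $H_{4,1}(f)=a_7H_3(1)-a_6\Omega_1+a_5\Omega_2-a_4\Omega_3$, which already separates $H_{4,1}(f)$ into four products whose factors have all been estimated above. First I would apply the triangle inequality,
\begin{equation*}
|H_{4,1}(f)|\le |a_7|\,|H_3(1)|+|a_6|\,|\Omega_1|+|a_5|\,|\Omega_2|+|a_4|\,|\Omega_3|,
\end{equation*}
and then feed in the estimates already on record: $|a_4|\le 1/3$ and $|a_5|\le 907/1632$ from Theorem \ref{4 sharpbounds}, the bounds on $|a_6|$ and $|a_7|$ from Lemma \ref{4 a6a7bound}, the sharp value $|H_3(1)|=|H_{3,1}(f)|\le 1/9$ from Theorem \ref{4 Ph31}, and the numerical bounds (\ref{4 omega1value})--(\ref{4 omega3value}) for $|\Omega_1|$, $|\Omega_2|$, $|\Omega_3|$. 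Collecting the four products then yields a first numerical bound on $|H_{4,1}(f)|$.

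In effect, the substantive work sits in the lemmas preceding this statement rather than in the combination itself. Passing from the determinantal definitions (\ref{4 omega1})--(\ref{4 omega3}) of $\Omega_1,\Omega_2,\Omega_3$ to their polynomial forms in $p_1,\dots,p_5$ via (\ref{4 a2})--(\ref{4 a7}), and then squeezing out the estimates (\ref{4 omega1value})--(\ref{4 omega3value}) by grouping terms suitably and invoking Lemma \ref{2 pomi lemma} repeatedly, is where the genuine effort lies; once those are in hand the final assembly is largely bookkeeping.

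The step I expect to be most delicate is the control of the dominant product $|a_6|\,|\Omega_1|$, since it carries the largest factors and therefore governs the size of the final constant. A purely term-by-term triangle inequality is wasteful here, because the Carath\'{e}odory data maximizing $|a_6|$ and those maximizing $|\Omega_1|$ need not coincide, so the sum of individual maxima overestimates $|H_{4,1}(f)|$ and will in general exceed the sharp-looking value asserted. To descend to the stated constant I would, where the crude bound falls short, return to the parametrization of Lemma \ref{pformula}, express the four products jointly in the variables $(p,\gamma,\eta,\rho)$, and run a bounded optimization over the relevant cuboid exactly as in the proof of Theorem \ref{4 Ph31}, thereby exploiting the correlations between $a_6$ and $\Omega_1$ instead of estimating them independently.
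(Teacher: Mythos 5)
Your first paragraph is, in fact, the paper's entire proof: the paper simply applies the triangle inequality to (\ref{4 4hankelexpression}) and substitutes $|a_4|\le 1/3$ and $|a_5|\le 907/1632$ from Theorem \ref{4 sharpbounds}, the bounds of Lemma \ref{4 a6a7bound}, $|H_{3,1}(f)|\le 1/9$ from Theorem \ref{4 Ph31}, and (\ref{4 omega1value})--(\ref{4 omega3value}); it performs no joint optimization of any kind. But the suspicion you voice in your last paragraph is not just a stylistic worry---it is numerically decisive, and it cuts against the paper's own proof as much as against your first step. Feeding the recorded constants into the assembly gives
\begin{equation*}
|a_7|\,|H_{3,1}(f)|+|a_6|\,|\Omega_1|+|a_5|\,|\Omega_2|+|a_4|\,|\Omega_3|\le \frac{1031}{1080}\cdot\frac{1}{9}+\frac{146}{225}\,(0.820011)+\frac{907}{1632}\,(0.360465)+\frac{1}{3}\,(0.606922)\approx 1.04,
\end{equation*}
which is nowhere near $0.428001$; even if one drops the dominant product $|a_6|\,|\Omega_1|\approx 0.53$ entirely, the remaining three terms already sum to about $0.51$. (There is also an internal inconsistency you would have tripped over: Lemma \ref{4 a6a7bound} asserts $|a_7|\le 1031/1080\approx 0.95463$, while its own proof concludes with $1791448/2073600\approx 0.863931$; neither value rescues the sum.) So the one-line substitution---yours and the paper's alike---does not yield the stated constant, and the theorem as written is not substantiated by the estimates on record.

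Your proposed fallback (re-expressing the products jointly in $(p,\gamma,\eta,\rho)$ via Lemma \ref{pformula} and optimizing over a cuboid as in Theorem \ref{4 Ph31}) is the right instinct, but note two things. First, it is only gestured at, not carried out, so your proposal does not close the gap it correctly identifies. Second, it faces a concrete obstruction: by (\ref{4 a2})--(\ref{4 a7}), $H_{4,1}(f)$ involves $p_5$ (through $a_6$ and $a_7$), whereas Lemma \ref{pformula} parametrizes only $p_2$, $p_3$, $p_4$; an optimization over $(p,\gamma,\eta,\rho)$ alone therefore cannot capture $H_{4,1}(f)$, and one would need a $p_5$-analogue of (\ref{b4}) with an additional parameter, substantially raising the dimension and difficulty of the optimization. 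In short: your plan matches the paper where the paper is easy, correctly distrusts the paper where the paper is wrong, but leaves the decisive step unexecuted.
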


\begin{proof}
By substituting the values obtained from Theorem \ref{4 sharpbounds}, Lemma \ref{4 a6a7bound}, and equations (\ref{4 omega1value})-(\ref{4 omega3value}) in equation (\ref{4 4hankelexpression}), we get the desired result.
\end{proof}

	%
	\subsection*{Acknowledgment}
	Neha Verma is thankful to the Department of Applied Mathematics, Delhi Technological University, New Delhi-110042 for providing a Research Fellowship.

\end{document}